\newtheorem{theorem}{Theorem} [section]
\newtheorem{conjecture}[theorem]{Conjecture}
\newtheorem{lemma}[theorem]{Lemma}
\numberwithin{equation}{section}
\begin{document}
\title{Divisibility among power GCD and power LCM matrices
on certain gcd-closed sets}
\begin{abstract}
Let $(x, y)$ and $[x, y]$ denote the greatest common divisor
and the least common multiple of the integers $x$ and $y$
respectively. We denote by $|T|$ the number
of elements of a finite set $T$. Let $a,b$ and $n$ be
positive integers and let $S=\{x_1, ..., x_n\}$ be a set of $n$
distinct positive integers. We denote by $(S^a)$ (resp. $[S^a]$)
the $n\times n$ matrix whose $(i,j)$-entry is the $a$th power of $(x_i,x_j)$
(resp. $[x_i,x_j]$). For any $x\in S$, define
$G_{S}(x):=\{d\in S: d<x, d|x \ {\rm and} \
(d|y|x, y\in S)\Rightarrow y\in \{d,x\}\}$.
In this paper, we show that if $a|b$ and $S$ is
gcd closed (namely, $(x_i, x_j)\in S$ for all integers
$i$ and $j$ with $1\le i, j\le n$) and
$\max_{x\in S}\{|G_S (x)|\}=2$
and the condition $\mathcal{G}$ being satisfied (i.e.,
any element $x\in S$ satisfies that either
$|G_S(x)|\le 1$, or $G_S(x)=\{y_1,y_2\}$ satisfying that
$[y_1,y_2]=x$ and $(y_1,y_2)\in G_S(y_1)\cap G_S(y_2)$),
then $(S^a)|(S^b), (S^a)|[S^b]$ and $[S^a]|[S^b]$
hold in the ring $M_{n}({\bf Z})$. Furthermore, we show the
existences of gcd-closed sets $S$ such that $S$ does not
satisfy the condition $\mathcal{G}$ and such factorizations
are true. Our result extends the Feng-Hong-Zhao theorem
gotten in 2009. This also partially confirms a conjecture raised by Hong
in [S.F. Hong, Divisibility among power GCD matrices and power LCM matrices,
{\it Bull. Aust. Math. Soc.}, doi:10.1017/S0004972725100361].
\end{abstract}
\author[J.X. Wan]{Jixiang Wan}
\address{College of Mathematics and Physics, Mianyang Teachers' College,
Mianyang, 621000, P.R. China}
\email{xiangxiangwanli@163.com; 2873555@qq.com}
\author[G.Y. Zhu]{Guangyan Zhu$^*$}
\address{School of Mathematics and Statistics, Hubei Minzu University,
Enshi 445000, P.R. China}
\email{2009043@hbmzu.edu.cn}
\thanks{$^*$ G.Y. Zhu is the corresponding author. 
The research was supported in part by the Startup
Research Fund of Hubei Minzu University for Doctoral Scholars
(Grant No. BS25008)}
\keywords{Divisibility; power GCD matrix; power LCM matrix;
gcd-closed set; greatest-type divisor.}
\subjclass[2020]{Primary 11C20; Secondary 11A05, 15B36}
\maketitle

\section{Introduction}
Let ${\bf Z}$ and ${\bf Z}^+$ denote the ring of integers
and the set of positive integers. Let $|T|$
stand for the cardinality of a finite set $T$ of integers.
For any $x, y\in{\bf Z}^+$, let $(x, y)$ and $[x, y]$
denote their greatest common divisor and least common
multiple, respectively. Let $f$ be an arithmetic function
and let $S=\{x_1, ..., x_n\}$
be a set of $n(\in {\bf Z}^+)$ distinct positive integers.
Let $(f(S))$ and $(f[S])$ denote the $n\times n$ matrices
having $f((x_i, x_j))$ and $(f[x_i,x_j])$ as its $(i,j)$-entry,
respectively. We say that $S$ is {\it factor closed} (FC)
if $[x\in S, d|x, d>0]\Rightarrow [d\in S]$, and that
$S$ is {\it gcd closed} if $(x_i, x_j)\in S \ \forall
1\le i,j\le n$. Then any FC set is gcd closed but not
conversely. Smith \cite{S-PLMS75} proved that
$\det(f(x_i, x_j))=\prod_{k=1}^n(f*\mu)(x_k)$,
where $S$ is FC and $(f*\mu)(x):=\sum_{d|x}f(d)\mu
\big(\frac{x}{d}\big)$ for any positive integer $x$.
Since then lots of generalizations of Smith's theorem
and related results have published (see, for example,
\cite{AYK-LAA17}-\cite{[Mc]} and \cite{T}-\cite{ZLX}).
The function $\xi_a$ is defined for any positive
integer $x$ by $\xi_a(x):=x^a$. The $n\times n$ matrix
$(\xi_a(x_i, x_j))$ (abbreviated by $(S^a)$) and
$(\xi_a[x_i, x_j])$ (abbreviated by
$[S^a]$) are called {\it $a$th power GCD matrix} on
$S$ and {\it $a$th power LCM matrix} on $S$, respectively.
In 1993, Bourque and Ligh \cite{BL-LMA93} extended the
Beslin-Ligh result \cite{BL-BAMS89} and Smith's
determinant by proving that if $S$ is gcd closed,
then $\det(S^a)=\prod_{k=1} ^n \alpha_{a, k}$, where
\begin{align}\label{eq1.2}
\alpha_{a, k}:=\sum_{d|x_k\atop d\nmid x_t,x_t<x_k}
(\xi_a*\mu)(d).
\end{align}

For any $x,y \in S$ with $x<y$, we say that $x$
is a {\it greatest-type divisor} of $y$ in $S$
if $[x|y,x|d|y,d\in S]\Rightarrow [d\in \{x,y\}]$.
For $x\in S$, $G_S(x)$ denotes the set of all
greatest-type divisors of $x$ in $S$. The concept
of greatest-type divisor was introduced by Hong and
played a key role in his solution \cite{H-JA99} of
the Bourque-Ligh conjecture \cite{BL-LAA92}. Bourque
and Ligh \cite{BL-LAA95} showed that
if $S$ is FC and $a\ge 1$ is an integer, then the $a$th
power GCD matrix $(S^a)$ divides the $a$th power LCM matrix
$[S^a]$ in the ring $M_n({\bf Z})$ of $n\times n$ matrices
over the integers. That is, $\exists A\in M_n({\bf Z})$
such that $[S^a]=(S^a)A$ or $[S^a]=A(S^a)$. Hong \cite{H-LAA02}
showed that such a factorization is no longer true in general
if $S$ is gcd closed and $\max_{x \in S}\{|{G_S(x)}|\}=2$.
These results were extended by Korkee and Haukkanen
\cite{KH-LAA08} and Chen et al. \cite{CFHQ-PMD22}.
Hong \cite{H-LAA08} is the first one investigating the
divisibility properties among power GCD matrices and
among power LCM matrices. In fact, he showed that
$(S^a)|(S^b), (S^a)|[S^b]$ and $[S^a]|[S^b]$ hold
in $M_{n}({\bf Z})$ if $a|b$ and $S$ is a divisor
chain (that is, $x_{\sigma(1)}|...|x_{\sigma(n)}$
for a permutation $\sigma$ of $\{1,..., n\}$), and
such factorizations are no longer true if $a\nmid b$
and $|S|\ge 2$. Evidently, a divisor chain is gcd
closed but not conversely. In 2022 and 2023, Zhu
\cite{Z-IJNT22} and Zhu and Li \cite{ZL-BAMS22}
confirmed three conjectures of Hong \cite{H-LAA08}
by showing that $(S^a)|(S^b), (S^a)|[S^b]$ and
$[S^a]|[S^b]$ hold in $M_{n}({\bf Z})$ when
$a|b$ and $S$ is a gcd-closed set with
$\max_{x\in S}\{|G_S (x)|\}=1$. In 2022, Zhu,
Li and Xu \cite{ZLX} showed the existences of
gcd-closed sets $S$ with $\max_{x\in S}
\{|G_{S}(x)|\}=2$ and infinitely many integers
$b\ge 2$ such that $(S)\nmid (S^b)$ (resp.
$(S)\nmid [S^b]$ and $[S]\nmid [S^b]$).
As shown in \cite{[H-BAMS25]}, for any set
$S$ of positive integers and for
any $x\in S$ with $|G_S(x)|\ge 2$, we say that
the two distinct greatest-type divisors $y_1$
and $y_2$ of $x$ in $S$ {\it satisfy the condition
$\mathcal{G}$} if $[y_1,y_2]=x$ and $(y_1,y_2)\in G_S(y_1)\cap G_S(y_2)$.
We say that $x$ {\it satisfies the condition $\mathcal{G}$} if any two
distinct greatest-type divisors of $x$ in $S$ satisfy the condition $\mathcal{G}$.
Moreover, we say that a set $S$ of positive integers {\it satisfies the
condition $\mathcal{G}$} if any element $x$ in $S$ satisfies that either
$|G_S(x)|\le 1$, or $|G_S(x)|\ge 2$ and $x$ satisfies the condition $\mathcal{G}$.
The following conjecture was proposed in
the last section of \cite{[H-BAMS25]}.

\begin{conjecture}{\rm {\cite[Conjecture 3.4]{[H-BAMS25]}}} \label{p1.1}
Let $a$ and $b$ be positive integers with $a|b$ and let $S$ be
a gcd-closed set satisfying the condition $\mathcal G$. Then
$(S^a)|(S^b)$, $(S^a)|[S^b]$ and $[S^a]|[S^b]$ in the ring $M_{|S|}(\mathbb Z)$.
\end{conjecture}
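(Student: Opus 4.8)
The plan is to reduce all three divisibilities to the combinatorics of the invariants $\alpha_{a,k}$ of \eqref{eq1.2} by means of one uniform matrix factorization, to prove the decisive divisibility of these invariants from condition $\mathcal G$, and then to handle the diagonal factors that appear in the LCM case.

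First fix a linear extension $x_1,\dots,x_n$ of the order on $S$ by divisibility (so $x_i\mid x_j\Rightarrow i\le j$) and let $U=(U_{ik})$ be the incidence matrix of $(S,\mid)$: $U_{ik}=1$ if $x_k\mid x_i$ and $U_{ik}=0$ otherwise. Then $U$ is lower triangular with unit diagonal, so $U\in M_n(\mathbf Z)$ is unimodular and $U^{-1}\in M_n(\mathbf Z)$ (its entries are the values of the M\"obius function of $(S,\mid)$). Because $S$ is gcd closed, M\"obius inversion on $(S,\mid)$ gives $\sum_{x_k\mid x_m}\alpha_{t,k}=x_m^{\,t}$ for every $x_m\in S$ and every integer $t$, where for $t=-a<0$ the rational numbers $\alpha_{-a,k}$ are defined by \eqref{eq1.2} with $\xi_a$ replaced by $\xi_{-a}$. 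Putting $D_t:=\operatorname{diag}(\alpha_{t,1},\dots,\alpha_{t,n})$ and $E_t:=\operatorname{diag}(x_1^{\,t},\dots,x_n^{\,t})$, these identities translate into
\begin{align*}
(S^t)=U\,D_t\,U^{T},\qquad [S^t]=E_t\,U\,D_{-t}\,U^{T}\,E_t ,
\end{align*}
valid for every exponent $t$. The decisive feature is that the unimodular matrix $U$ is the same for all exponents and for both families, and that $E_bE_a^{-1}=E_{b-a}\in M_n(\mathbf Z)$ because $a\le b$. In particular $(S^a)\mid(S^b)$ is equivalent to $U^{-T}D_a^{-1}D_bU^{T}\in M_n(\mathbf Z)$, which holds as soon as $\alpha_{a,k}\mid\alpha_{b,k}$ for every $k$.

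To establish the latter I compute $\alpha_{a,x}$ for $x\in S$: an inclusion--exclusion over $G_S(x)$, using the identity $\sum_{x_k\mid x_m}\alpha_{a,k}=x_m^{\,a}$ for each gcd that occurs, gives
\begin{align*}
\alpha_{a,x}=\sum_{I\subseteq G_S(x)}(-1)^{|I|}\Bigl(\gcd\nolimits_{y\in I}y\Bigr)^{a},
\end{align*}
with the gcd over $I=\varnothing$ read as $x$. Condition $\mathcal G$ now enters through the identity $[y_i,y_j]=x$ for every pair of greatest-type divisors of $x$: for each prime $p$ at most one of the $y_i$ can satisfy $v_p(y_i)<v_p(x)$ (with $v_p$ the $p$-adic valuation), so the cofactors $c_y:=x/y$ $(y\in G_S(x))$ are pairwise coprime and $\gcd_{y\in I}y=x\big/\prod_{y\in I}c_y$ for \emph{every} subset $I$. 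The alternating sum then telescopes into a product,
\begin{align*}
\alpha_{a,x}=w_x^{\,a}\prod_{y\in G_S(x)}\bigl(c_y^{\,a}-1\bigr),\qquad w_x:=\gcd\nolimits_{y\in G_S(x)}y
\end{align*}
(and $w_x:=x$, empty product, when $G_S(x)=\varnothing$). Since $a\mid b$ gives $w_x^{\,a}\mid w_x^{\,b}$ and $c_y^{\,a}-1\mid c_y^{\,b}-1$, we conclude $\alpha_{a,x}\mid\alpha_{b,x}$, hence $(S^a)\mid(S^b)$; this already subsumes the Zhu--Li case $\max_x|G_S(x)|=1$ \cite{Z-IJNT22,ZL-BAMS22}. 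The same computation yields $x^{\,a}\alpha_{-a,x}=\prod_y(1-c_y^{\,a})\in\mathbf Z$ and $x^{\,a}\alpha_{-a,x}\mid x^{\,b}\alpha_{-b,x}$.

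For the two LCM divisibilities I would work from the same factorizations. A short computation gives
\begin{align*}
[S^b]\,[S^a]^{-1}=E_b\,U\,D_{-b}\,W\,D_{-a}^{-1}\,U^{-1}\,E_{-a},\qquad W:=U^{T}E_{b-a}U^{-T},
\end{align*}
where $W$ is upper triangular with integer entries and diagonal $x_k^{\,b-a}$ (it is a product of integer matrices). By the product formula above, the diagonal of $D_{-b}WD_{-a}^{-1}$ is $\bigl(\prod_y(1-c_y^{\,b})\big/\prod_y(1-c_y^{\,a})\bigr)_k\in\mathbf Z$; what remains is to show that, after conjugation by $U$ and the outer multiplications by $E_b$ and $E_{-a}$, the off-diagonal contributions are integral too. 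I expect to do this by induction on $|S|$: delete a maximal element $x_n$, so that $S'=S\setminus\{x_n\}$ is still gcd closed and still satisfies $\mathcal G$ (a maximal element blocks no greatest-type divisor and occurs in no new gcd, so $G_{S'}(y)=G_S(y)$ for every $y\in S'$); the factorizations for $S$ restrict to those for $S'$ through the corresponding principal block of $U$, reducing the integrality to the case $S'$ plus one new row and column, whose entries one controls by means of the \emph{second} clause of $\mathcal G$, namely $(y_1,y_2)\in G_S(y_1)\cap G_S(y_2)$ --- this forces the diamond sitting under $x_n$ to be compatible with the diamonds lower down. The same analysis (with exponents $a\mid b$) gives $[S^a]\mid[S^b]$, and then $(S^a)\mid[S^b]$ follows by composing $(S^a)\mid(S^b)$ with the Bourque--Ligh-type factorization $(S^b)\mid[S^b]$ (cf. \cite{BL-LAA95}), which the same induction yields under $\mathcal G$.

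The step I expect to be the main obstacle is precisely this last one: controlling the off-diagonal part of the LCM factorization, equivalently proving that $(S^t)\mid[S^t]$ holds for gcd-closed $S$ satisfying $\mathcal G$. By Hong's theorem \cite{H-LAA02}, $(S^t)\mid[S^t]$ already fails for some gcd-closed $S$ with $\max_x|G_S(x)|=2$; so condition $\mathcal G$ --- and in particular the clause $(y_1,y_2)\in G_S(y_1)\cap G_S(y_2)$, which played no role in $(S^a)\mid(S^b)$ --- must be used in an essential way here, and converting that purely order-theoretic hypothesis into the required integrality of matrix entries is the crux. A secondary difficulty is the bookkeeping for $|G_S(x)|\ge 3$: the pairwise identity $[y_i,y_j]=x$ handled the $\alpha$-invariants uniformly for all multiplicities, but carrying the peeling and the off-diagonal estimates through higher multiplicity (rather than stopping at $\max_x|G_S(x)|=2$) is where the remaining effort lies.
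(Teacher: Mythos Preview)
First a framing point: the statement you address is labelled a conjecture in the paper and is not proved there in full. What the paper establishes is Theorem~\ref{t1.2}, the special case $\max_{x\in S}|G_S(x)|=2$; the comparison below is therefore to that proof.

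Your argument for $(S^a)\mid(S^b)$ is complete, correct, and in fact settles that third of the conjecture in full generality --- more than the paper claims. The factorization $(S^t)=U D_t U^{T}$ with $U$ unimodular reduces everything to $\alpha_{a,k}\mid\alpha_{b,k}$; the inclusion--exclusion identity $\alpha_{a,x}=\sum_{I\subseteq G_S(x)}(-1)^{|I|}(\gcd_I)^{a}$ holds on any gcd-closed set (cover the proper divisors of $x$ in $S$ by the down-sets of the $y_i\in G_S(x)$ and apply $\sum_{x_k\mid x_m}\alpha_{a,k}=x_m^{\,a}$ to each intersection); and the pairwise clause $[y_i,y_j]=x$ alone forces the cofactors $c_y=x/y$ to be pairwise coprime, hence $\gcd_I=x/\prod_{y\in I}c_y$, the product form $\alpha_{a,x}=w_x^{\,a}\prod_y(c_y^{\,a}-1)$, and the divisibility. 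This is a genuinely different and more economical route than the paper's, which for $\max=2$ computes $(S^b)(S^a)^{-1}$ entry by entry via Lemmas~\ref{L:2.1}, \ref{L:2.9}\,(i) and the peeling induction of Lemma~\ref{L:2.16}; notably the paper's argument, through Lemma~\ref{L:2.6}, invokes the second clause $(y_1,y_2)\in G_S(y_1)\cap G_S(y_2)$ even for the GCD--GCD divisibility, whereas your proof shows that clause is unnecessary there.

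For $(S^a)\mid[S^b]$ and $[S^a]\mid[S^b]$ your proposal remains a sketch, as you acknowledge. The reduction of $(S^a)\mid[S^b]$ to $(S^b)\mid[S^b]$ buys nothing, since the latter is the $a=b$ instance of the same assertion. The induction you outline --- delete the maximal element and control one new row and column --- is exactly the scheme of Lemma~\ref{L:2.16}, but executing it requires an analogue of Lemma~\ref{L:2.9}\,(i) (second statement) and (ii). In the paper those lemmas succeed for $\max=2$ precisely by using the second clause of $\mathcal G$ through Lemma~\ref{L:2.6}: from $(y_1,y_2)\in G_S(y_i)$ one propagates identities like $[d,y_2]=x$ and $[d,y_3]=y_1$ for $d=(x,z)$, which either annihilate the relevant numerator or force the quotient to factor explicitly. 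Neither the paper nor your proposal supplies a version of this mechanism for $|G_S(x)|\ge3$, so the two LCM divisibilities remain open beyond $\max=2$; within $\max=2$ your outline would essentially reproduce the paper's proof once Lemma~\ref{L:2.9} is filled in.
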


By Zhu's theorem \cite{Z-IJNT22} and the Zhu-Li result \cite{ZL-BAMS22},
one knows that this conjecture is true when $\max_{x\in S}\{|G_S (x)|\}=1$.
For $\max_{x\in S}\{|G_S (x)|\}=2$, notice that the condition $\mathcal G$
is the condition $\mathcal C$ in \cite{FHZ-DM09}.
When $a=b$ and $\max_{x\in S}\{|G_S(x)|\}=2$, Feng, Hong and Zhao
\cite{FHZ-DM09} verified this conjecture. For the case when $a|b$ and $a<b$,
and $\max_{x\in S}\{|G_S (x)|\}\ge 2$, Conjecture \ref{p1.1} remains widely open.
In this paper, our main goal is to explore Conjecture \ref{p1.1} for the case
$\max_{x\in S} \{|G_S(x)|\}=2$. The main
results of this paper can be stated as follows.

\begin{theorem}\label{t1.2}
Let $a$ and $b$ be positive integers with $a|b$
and let $S$ be a gcd-closed set with $\max_{x\in S}
\{|G_S(x)|\}=2$ and the condition $\mathcal{G}$ being
satisfied. Then $(S^a)|(S^b), (S^a)|[S^b]$ and $[S^a]|[S^b]$
hold in the ring $M_n({\bf Z})$.
\end{theorem}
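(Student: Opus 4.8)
The plan is to argue by induction on $n=|S|$, the case $n=1$ being immediate. List $S$ as $x_1<\dots<x_n$ and put $S'=S\setminus\{x_n\}$. One checks readily that $S'$ is again gcd closed (the gcd of two elements below $x_n$ lies in $S$ and is $<x_n$), that $G_{S'}(x)=G_S(x)$ for every $x\in S'$ (deleting the top element changes no divisibility relation among elements $<x_n$), and hence that $S'$ satisfies $\mathcal G$ and $\max_{x\in S'}|G_{S'}(x)|\le 2$; so by the induction hypothesis the three divisibilities hold for $S'$. Since $n\ge 2$, $x_n$ has at least one greatest-type divisor, so $|G_S(x_n)|\in\{1,2\}$. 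If $G_S(x_n)=\{y_0\}$ let $j_0$ be the index of $y_0$ in $S'$; if $G_S(x_n)=\{y_1,y_2\}$ then $\mathcal G$ gives $[y_1,y_2]=x_n$ and $z:=(y_1,y_2)\in G_S(y_1)\cap G_S(y_2)$, and writing $y_1=zu$, $y_2=zv$ with $(u,v)=1$ we have $x_n=zuv$; let $j_1,j_2,k$ be the indices of $y_1,y_2,z$ in $S'$.

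The engine of the proof is a single block decomposition that holds for an \emph{arbitrary} nonzero integer exponent $c$. Set
\[
\mathbf w=\begin{cases}\,e_{j_0}&\text{if }|G_S(x_n)|=1,\\[1mm]\,e_{j_1}+e_{j_2}-e_{k}&\text{if }|G_S(x_n)|=2,\end{cases}
\qquad
P=\begin{pmatrix}I_{n-1}&\mathbf 0\\[1mm]\mathbf w^{T}&1\end{pmatrix},
\]
an integer unimodular matrix depending only on $S$. Then
\[
(S^{c})=P\,\mathrm{diag}\big((S'^{\,c}),\,\alpha_{c,n}\big)\,P^{T},
\]
where $\alpha_{c,n}=x_n^{c}-\mathbf w^{T}(S'^{\,c})\mathbf w$ is the Bourque--Ligh number \eqref{eq1.2}. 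This comes down to the identity, valid for every $x\in S'$, that $(x,x_n)^{c}=(x,y_0)^{c}$ in the first case and $(x,x_n)^{c}=(x,y_1)^{c}+(x,y_2)^{c}-(x,z)^{c}$ in the second. The identity I would prove using $(x,[y_1,y_2])=[(x,y_1),(x,y_2)]$ together with the fact that any proper divisor of $x_n$ lying in $S$ divides one of the greatest-type divisors of $x_n$; the latter forces $(x,y_1)\mid(x,y_2)$ or $(x,y_2)\mid(x,y_1)$, so the lcm on the right collapses and the two sides agree — note that the exponent plays no role, which is exactly what makes the \emph{same} $P$ work for every $c$. A direct evaluation then yields the explicit forms $\alpha_{c,n}=y_0^{\,c}\big((x_n/y_0)^{c}-1\big)$, resp. $\alpha_{c,n}=z^{c}(u^{c}-1)(v^{c}-1)$, and, writing $\gamma_{c,n}=x_n^{-c}-\mathbf w^{T}(S'^{-c})\mathbf w$ for the analogue with exponent $-c$, one finds $\gamma_{c,n}=-\alpha_{c,n}/(x_n^{c}y_0^{\,c})$, resp. $\gamma_{c,n}=\alpha_{c,n}/(x_n^{c}z^{c})$.

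Now the three divisibilities follow. \emph{Power GCD:} since $P$ is the same for the exponents $a$ and $b$, $(S^{b})=(S^{a})\,P^{-T}\mathrm{diag}\big((S'^{\,a})^{-1}(S'^{\,b}),\,\alpha_{b,n}/\alpha_{a,n}\big)P^{T}$, and the cofactor is integral because $(S'^{\,a})\mid(S'^{\,b})$ by induction and because $\alpha_{a,n}\mid\alpha_{b,n}$, the last being read off the explicit products above once one knows $(t^{a}-1)\mid(t^{b}-1)$ and $t^{a}\mid t^{b}$ for $a\mid b$. \emph{Power LCM:} write $[S^{c}]=D_{c}(S^{-c})D_{c}$ with $D_{c}=\mathrm{diag}(x_1^{c},\dots,x_n^{c})$; substituting the decomposition of $(S^{-c})$ and using $D_cP=R_cD_c$ gives
\[
[S^{c}]=R_{c}\,\mathrm{diag}\big([S'^{\,c}],\,\beta_{c,n}\big)\,R_{c}^{T},\qquad R_{c}=\begin{pmatrix}I_{n-1}&\mathbf 0\\[1mm]\mathbf q_{c}^{T}&1\end{pmatrix},
\]
with $\mathbf q_{c}=(x_n/y_0)^{c}e_{j_0}$, resp. $\mathbf q_{c}=v^{c}e_{j_1}+u^{c}e_{j_2}-(uv)^{c}e_{k}$, and $\beta_{c,n}=x_n^{2c}\gamma_{c,n}=-x_n^{c}\big((x_n/y_0)^{c}-1\big)$, resp. $\beta_{c,n}=x_n^{c}(u^{c}-1)(v^{c}-1)$. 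Here $R_{c}$ genuinely depends on $c$, but $R_{a}^{-1}R_{b}=\begin{pmatrix}I_{n-1}&\mathbf 0\\(\mathbf q_b-\mathbf q_a)^{T}&1\end{pmatrix}$ is still integer unimodular, so assembling $M=[S^{a}]^{-1}[S^{b}]$ leaves exactly three integrality demands: $[S'^{\,a}]^{-1}[S'^{\,b}]$ (integral by induction), $\beta_{b,n}/\beta_{a,n}$ (integral by the explicit forms and $a\mid b$), and the row vector $\beta_{a,n}^{-1}(\mathbf q_b-\mathbf q_a)^{T}[S'^{\,b}]$. \emph{Mixed:} $(S^{a})\mid[S^{b}]$ needs nothing new, since $(S^{a})\mid(S^{b})$ by the above and $(S^{b})\mid[S^{b}]$ is the Feng--Hong--Zhao theorem (the $a=b$ instance of Conjecture \ref{p1.1}), so the product of the two integer cofactors does the job.

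The one genuinely delicate step — the expected main obstacle — is the integrality of $\beta_{a,n}^{-1}(\mathbf q_b-\mathbf q_a)^{T}[S'^{\,b}]$ when $|G_S(x_n)|=2$ (in the one--greatest--type--divisor case it is a one-line valuation check). Concretely one must show that $x_n^{a}(u^{a}-1)(v^{a}-1)$ divides
\[
(v^{b}-v^{a})[x,y_1]^{b}+(u^{b}-u^{a})[x,y_2]^{b}-\big((uv)^{b}-(uv)^{a}\big)[x,z]^{b}
\]
for every $x\in S'$. I would split on which of $y_1,y_2$ realises $(x,x_n)$; say $(x,x_n)=(x,y_1)$, whence $(x,y_2)=(x,z)$. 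Putting $s=(x,y_1)/(x,z)$ and $\tau=u/s$, valuation bookkeeping gives $s\mid u$ together with the clean identities $[x,y_1]=\tau[x,z]$, $[x,y_2]=v[x,z]$ and $[x,z]=zs\cdot x/(x,y_1)$, which collapse the displayed quantity to $[x,z]^{b}\,v^{a}(v^{b-a}-1)\,\tau^{a}(\tau^{b-a}-s^{a})$. Divisibility by the factors $z^{a}s^{a}v^{a}\tau^{a}$ and $(v^{a}-1)$ is then immediate; the remaining factor $u^{a}-1=s^{a}\tau^{a}-1$ is the crux, and here the decisive point — ultimately the reason $\mathcal G$ is imposed all the way down — is that the configurations of $(x,y_1)$ compatible with $z\in G_S(y_1)$, and recursively with $\mathcal G$ for the elements below $y_1$, force $s=1$ or $\tau=1$; in either case $s^{a}\tau^{a}-1$ divides $\tau^{b-a}-s^{a}$, resp. the divisibility is supplied by $[x,z]^{b}/(z^{a}s^{a})$ and $(v^{b-a}-1)/(v^{a}-1)$, using $(t^{a}-1)\mid(t^{b-a}-1)$ since $a\mid(b-a)$. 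Carrying this case analysis — together with the symmetric case $(x,x_n)=(x,y_2)$ — to its conclusion is the substance of the proof; everything else is formal manipulation in $M_n(\mathbf Z)$.
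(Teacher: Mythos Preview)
Your inductive scheme is correct and takes a genuinely different route from the paper. Both arguments remove $x_n$ and reduce to $S'$, but the paper works with the explicit Bourque--Ligh inverses (Lemmas~\ref{L:2.1} and~\ref{L:2.8}), verifies entry by entry that the last row and column of $(S^b)(S^a)^{-1}$, $[S^b](S^a)^{-1}$, $[S^b][S^a]^{-1}$ are integers (Lemma~\ref{L:2.15}), and then checks that the remaining $(n{-}1)\times(n{-}1)$ block differs from the $S'$ answer by integers (Lemma~\ref{L:2.16}). You instead exhibit a single unimodular congruence $(S^c)=P\,\mathrm{diag}\big((S'^{\,c}),\alpha_{c,n}\big)P^T$ with $P$ \emph{independent of $c$}; this makes the GCD divisibility and the scalar ratios $\alpha_{b,n}/\alpha_{a,n}$, $\beta_{b,n}/\beta_{a,n}$ transparent, and your derivation of $(S^a)\mid[S^b]$ by composing $(S^a)\mid(S^b)$ with the Feng--Hong--Zhao instance $(S^b)\mid[S^b]$ is a genuine economy over the paper, which treats all three divisibilities in parallel via Lemma~\ref{L:2.9}. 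What your approach buys is structural clarity; what the paper's buys is that every quantity is written out, so no residual case analysis is deferred.

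The one step you leave as ``the crux'' is precisely the content of the paper's Lemma~\ref{L:2.6}, and you should invoke it rather than gesture at a recursive argument. In your notation: set $d:=(x,x_n)\in S$ with $d\mid y_1$. If $d\mid z$ then $(x,z)=d$ and $s=1$. If $d\nmid z$ (so in particular $d\nmid y_2$, hence $|G_S(y_1)|=2$ since $z\in G_S(y_1)$ would otherwise force $d\mid z$), then Lemma~\ref{L:2.6} applied to $y_1$ with $z\in G_S(y_1)$ and the proper divisor $d\mid y_1$, $d\nmid z$ yields $[d,z]=y_1$; hence $s=d/(d,z)=[d,z]/z=y_1/z=u$ and $\tau=1$. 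This is exactly the dichotomy you need, and it is the same combinatorial fact the paper uses in Case~2 of Lemma~\ref{L:2.9} to obtain $[z,y_1]=[z,y_3]$ and $[z,x]=[z,y_2]$. Two minor patches: when $\max_{x\in S'}|G_{S'}(x)|\le 1$ you must appeal to Lemma~\ref{L:2.13} rather than the inductive hypothesis (the paper does this explicitly), and your base case should really be that result rather than $n=1$, since the hypothesis $\max_{x\in S}|G_S(x)|=2$ already forces $n\ge 4$.
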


\begin{theorem}\label{t1.3} Each of the following is true.

{\rm (i).} For any positive integer $b\neq2$, there exist gcd-closed
sets $S_1$ with $|S_1|=4$ and $\max_{x\in S_1}\{|G_{S_1}(x)|\}=2$
and the condition $\mathcal{G}$ not being satisfied such that
$(S_1)|(S_1^b)$ holds in the ring $M_4({\bf Z})$.

{\rm (ii).} For $b=3,$ or any integer $b\ge4$ and
$b\not\equiv 1,5\pmod 6$, there exist gcd-closed
sets $S_2$ with $|S_2|=4$ and $\max_{x\in S_2}\{|G_{S_2}(x)|\}=2$
and the condition $\mathcal{G}$ not being satisfied such that
$(S_2)|[S_2^b]$ holds in the ring $M_4({\bf Z})$.

{\rm (iii).} There exist integers $b>1$ and gcd-closed
sets $S_3$ with $\max_{x\in S_3}\{|G_{S_3}(x)|\}=2$,
$|S_3|\in\{4,5\}$ and the condition $\mathcal{G}$
not being satisfied such that $[S_3]|[S_3^b]$ holds
in the ring $M_n({\bf Z})$.
\end{theorem}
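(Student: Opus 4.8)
The plan is to prove all three parts constructively: for each part I would write down an explicit gcd-closed set, verify by hand that it has $\max_{x\in S}|G_S(x)|=2$ and violates condition $\mathcal G$, and then establish the claimed matrix divisibility by producing an integer matrix $A$ realizing it — equivalently, by solving the systems $(S^a)=(S)A$, $[S^b]=(S)A$, or $[S^b]=[S]A$ column by column and checking that every solution is integral. The basic building block for parts (i) and (ii) is a four-element set $S=\{1,p,q,m\}$ with $p<q$ primes and $pq\mid m$, $m>pq$: this is gcd closed, $G_S(m)=\{p,q\}$ while all other greatest-type-divisor sets have size at most $1$, and $\mathcal G$ fails at $m$ because $[p,q]=pq\ne m$.

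For part (i), the first column of $(S)$ equals $(1,1,1,1)^{\mathsf T}$, and when one solves $(S)v=(\text{a column of }(S^b))$ the first three columns yield integer vectors automatically (their entries are partial geometric sums $1+p+\cdots+p^{b-1}$ and the like), so $(S)\mid(S^b)$ reduces to the single requirement that $v_4=\dfrac{m^{b}-p^{b}-q^{b}+1}{m-p-q+1}$ be an integer. Since $m\equiv p+q-1\pmod{m-p-q+1}$, this is an elementary congruence to be verified modulo the prime factors of $m-p-q+1$. With $S=\{1,2,5,20\}$ (so $m-p-q+1=14$) one checks, using the orders of $2$ and $5$ modulo $7$, that it works exactly when $b$ is odd or $6\mid b$; with $S=\{1,2,3,12\}$ (so $m-p-q+1=8$) it works for every even $b\ge4$; the union of the two families is $\{\,b\in\mathbb{Z}^{+}:b\ne2\,\}$. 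Part (ii) runs identically with $[S^b]$ in place of $(S^b)$: solving $(S)v=(\text{a column of }[S^b])$ reduces $(S)\mid[S^b]$ to $(m-p-q+1)\mid(m^{b}-p^{b}-q^{b}+1)$ together with $(m-p-q+1)\mid(m^{b}-(pq)^{b})$; for $\{1,2,5,20\}$ the combined condition holds precisely for $b$ a multiple of $3$, and for $\{1,2,3,12\}$ precisely for even $b\ge4$, so the union is exactly $\{3\}\cup\{\,b\ge4:b\not\equiv1,5\pmod6\,\}$.

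Part (iii) is the hard case. For a four-element set $\{1,p,q,m\}$ the system $[S]v=(\text{a column of }[S^b])$ couples all four coordinates of $v$, and one computes (for example for $\{1,2,3,12\}$) that $[S]\nmid[S^b]$ for all $b\ge2$; so I would enlarge the set. The candidate is $S_3=\{1,2,3,4,12\}$, which is gcd closed, has $\max_{x\in S_3}|G_{S_3}(x)|=|G_{S_3}(12)|=|\{3,4\}|=2$, and violates $\mathcal G$ through its \emph{second} clause: $[3,4]=12$ but $(3,4)=1\notin G_{S_3}(4)=\{2\}$. For $b=2$ one then exhibits the $5\times5$ integer matrix $A$ with $[S_3^{2}]=[S_3]A$, whose columns may be taken to be $(18,3,-5,-5,1)^{\mathsf T}$, $(12,6,-4,-2,0)^{\mathsf T}$, $(-18,27,15,-9,-3)^{\mathsf T}$, $(0,0,0,16,-4)^{\mathsf T}$, $(0,0,0,0,12)^{\mathsf T}$, and checks $[S_3]A=[S_3^{2}]$ by direct multiplication; a four-element example for a suitable exponent $b$ may be obtained by the same kind of search, which is what the clause $|S_3|\in\{4,5\}$ allows for.

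The main obstacle is precisely the LCM-to-LCM divisibility of part (iii): because the linear system no longer collapses to one scalar condition, one cannot parametrise as cleanly as in (i)--(ii) and must instead hunt for a set (of size $4$ or $5$) and an exponent $b$ making the whole quotient matrix integral, while at the same time arranging that $\mathcal G$ fails and that no third greatest-type divisor is created. A secondary, purely bookkeeping, point in parts (i)--(ii) is to confirm that the finitely many explicit test sets cover \emph{exactly} the asserted set of exponents — in particular that $b=2$ is genuinely excluded and that the two families together leave no gap.
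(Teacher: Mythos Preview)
Your approach is the same as the paper's: explicit four-element sets $\{1,u,v,uvw\}$ with $(u,v)=1$ and $w>1$ for parts (i) and (ii), reducing the matrix divisibility to one or two scalar congruences modulo $\Delta_1=uvw-u-v+1$, and a direct matrix verification for part (iii). Your specific choices differ and are in places more economical: for (i) you cover all $b\ne 2$ with only two sets ($\{1,2,5,20\}$ and $\{1,2,3,12\}$) where the paper uses four ($\{1,2,3,12\}$, $\{1,3,4,48\}$, $\{1,3,4,24\}$, $\{1,2,5,20\}$, one for each odd residue class mod~$6$ plus the even case); for (ii) you use $\{1,2,5,20\}$ and $\{1,2,3,12\}$ where the paper uses $\{1,2,3,12\}$ and $\{1,3,4,24\}$; and for (iii) your size-$5$ example $\{1,2,3,4,12\}$ with $b=2$ is considerably lighter than the paper's $\{1,2,3,4,24\}$ with $b=11$.

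The one point to complete is that the paper also exhibits a size-$4$ example for (iii), namely $S_3=\{1,3,5,45\}$ with $b=5$, whereas you only assert that such an example ``may be obtained by the same kind of search''. Since the statement uses the plural ``sets'' together with the clause $|S_3|\in\{4,5\}$, the intended reading is that instances of \emph{both} sizes exist; your size-$5$ example alone does not yet establish that. Either produce a concrete four-element instance (the paper's $\{1,3,5,45\}$, $b=5$ works and is quick to check), or make explicit that under a strict reading of the existential quantifier your single example already suffices.
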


Obviously, letting $a=b$, Theorem \ref{t1.2}
reduces to the main result of Feng, Hong and Zhao
\cite{FHZ-DM09}. The proofs of Theorems \ref{t1.2}
and \ref{t1.3} use combinatorial and number-theoretic
methods. We organize this paper as follows. In
Section 2, we supply some preliminary lemmas that
are needed in the proof of Theorem \ref{t1.2}. In
Section 3, we present the proofs of Theorems \ref{t1.2}
and \ref{t1.3}. The final section is devoted to some remarks.

As in \cite{H-LAA08}, for any permutation $\sigma$
on the set $\{1, ..., n \}$, $(S^a)|(S^b)$ if and only if $(S_{\sigma} ^a)|(S_{\sigma}^b)$.
Likewise, $(S^a)|[S^b]$ if and only if $(S_{\sigma}^a)|[S_{\sigma}^b]$,
and $[S^a]|[S^b]$ if and only if $[S_{\sigma}^a]|[S_{\sigma}^b]$,
where $S_\sigma:=\{x_{\sigma (1)},..., x_{\sigma (n)}\}$.
So, without loss of any generality (WLOG), throughout we always
assume that $S=\{x_1,..., x_n\}$ satisfies $x_1<...<x_n$.

\section{Auxiliary results}
In this section, we supply several lemmas that will be
needed in the proof of Theorem \ref{t1.2}. We begin with
a result due to Bourque and Ligh which gives the inverse
of the power GCD matrix on a gcd-closed set.
\begin{lemma} {\rm \cite[Theorem 3] {BL-LMA93}} \label{L:2.20}
If $S$ is gcd closed and $(f(S))$ is
nonsingular, then for any integers $i$ and
$j$ with $1\le i, j\le n$, we have
$$((f(S))^{-1})_{ij}:={\underset{x_i|x_k\atop x_j|x_k}
{\sum}}\frac{c_{ik}c_{jk}}{\delta_k}$$
with
\begin{align*}
\delta_k:={\underset{d|x_k\atop d\nmid x_t, x_t<x_k}{\sum}}(f*\mu)(d)
\end{align*}
and
\begin{align}\label{eq2.1}
c_{ij}:=\sum _{dx_i|x_j\atop dx_i\nmid x_t, x_t<x_j}\mu(d).
\end{align}
\end{lemma}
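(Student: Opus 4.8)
\textbf{Plan for proving Lemma \ref{L:2.20}.}
The statement to establish is the Bourque--Ligh formula for the inverse of the power GCD matrix $(f(S))$ on a gcd-closed set $S$. The plan is to exhibit an explicit matrix $E=(c_{ij})$, with the $c_{ij}$ defined by \eqref{eq2.1}, that triangulates $(f(S))$ in the following precise sense: I claim that $E$ is (up to reindexing by the assumed ordering $x_1<\cdots<x_n$) upper triangular with unit diagonal, and that the congruence-type factorization $(f(S))=E^{T}DE$ holds, where $D=\mathrm{diag}(\delta_1,\dots,\delta_n)$ is the diagonal matrix of the quantities $\delta_k$. Granting this factorization, the inverse is immediate: $(f(S))^{-1}=E^{-1}D^{-1}E^{-T}$, and reading off the $(i,j)$-entry gives exactly $\sum_{k}c_{ik}c_{jk}/\delta_k$ where the sum ranges over those $k$ contributing, which will turn out to be precisely the indices $k$ with $x_i\mid x_k$ and $x_j\mid x_k$. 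So the real content is the factorization identity, not the inversion.

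First I would verify the two support conditions that make the formula meaningful and the sum finite. From the definition \eqref{eq2.1}, $c_{ij}$ is a sum over divisors $d$ with $dx_i\mid x_j$, so $c_{ij}=0$ unless $x_i\mid x_j$; combined with the ordering this shows $E$ is upper triangular, and taking $i=j$ forces $d=1$ (as $dx_i\mid x_i$ needs $d=1$), giving $c_{ii}=\mu(1)=1$, so $E$ has unit diagonal and is invertible over $\mathbf{Z}$. This is also what restricts the index set in the inverse formula to $\{k: x_i\mid x_k,\ x_j\mid x_k\}$, since a nonzero product $c_{ik}c_{jk}$ forces both $x_i\mid x_k$ and $x_j\mid x_k$.

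The heart of the argument is to check $(f(S))_{ij}=\sum_{k}c_{ik}\,\delta_k\,c_{jk}$ entrywise, that is,
\begin{align*}
f((x_i,x_j))=\sum_{k}c_{ik}\,c_{jk}\Big(\sum_{d\mid x_k,\ d\nmid x_t\,(x_t<x_k)}(f*\mu)(d)\Big).
\end{align*}
The strategy here is a M\"obius/telescoping computation: write $f(m)=\sum_{e\mid m}(f*\mu)(e)$ for $m=(x_i,x_j)$, and reorganize the double sum on the right by grouping according to the argument $d$ of $(f*\mu)$. The combinatorial identity one needs is that, for each fixed $e$, the coefficient $\sum_{k}c_{ik}c_{jk}[\,e\mid x_k,\ e\nmid x_t\ \forall x_t<x_k\,]$ collapses to the indicator that $e\mid(x_i,x_j)$, i.e. that $e\mid x_i$ and $e\mid x_j$. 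This is where the gcd-closedness of $S$ is indispensable: it guarantees that for divisors arising as common divisors of elements of $S$ the relevant ``smallest multiple in $S$'' is well defined inside $S$, so that the inner sums over $c_{ik}$ telescope cleanly (the defining property $dx_i\mid x_k,\ dx_i\nmid x_t$ picks out a unique minimal $x_k$ in each fiber).

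The step I expect to be the main obstacle is precisely this telescoping/cancellation, namely proving the auxiliary identity $\sum_{x_i\mid x_k}c_{ik}\,[\,e\mid x_k,\ e\nmid x_t\ (x_t<x_k)\,]=[\,e\mid x_i\,]$ (and its symmetric partner in $j$), which drives the whole collapse. The delicate point is handling the ``greatest-type'' minimality condition $d\nmid x_t$ for all smaller $x_t$ in the definitions of both $c_{ij}$ and $\delta_k$ simultaneously, and ensuring that gcd-closedness forces the two minimality conditions to be compatible so that no spurious terms survive. I would isolate this as a sublemma, prove it by a M\"obius inversion over the divisor lattice restricted to $S$, and then assemble the factorization $(f(S))=E^{T}DE$ and conclude by inversion as above.
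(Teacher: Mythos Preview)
The paper does not prove this lemma at all: it is quoted as Theorem~3 of Bourque--Ligh \cite{BL-LMA93} and used as a black box. So there is no ``paper's proof'' to compare against; the question is whether your sketch would actually establish the result.

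Your plan contains a genuine error in the direction of the factorization. You set $E=(c_{ij})$ and claim that $(f(S))=E^{T}DE$, and then that inverting gives $((f(S))^{-1})_{ij}=\sum_k c_{ik}c_{jk}/\delta_k$. But if $(f(S))=E^{T}DE$ then $(f(S))^{-1}=E^{-1}D^{-1}(E^{-1})^{T}$, whose $(i,j)$-entry is $\sum_k (E^{-1})_{ik}(E^{-1})_{jk}/\delta_k$, \emph{not} $\sum_k c_{ik}c_{jk}/\delta_k$; you would need $E^{-1}=E$. Equivalently, the ``heart of the argument'' identity you propose to check, namely $f((x_i,x_j))=\sum_k c_{ik}\,\delta_k\,c_{jk}$, is simply false. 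Take $S=\{1,2\}$: then $c_{11}=c_{22}=1$, $c_{12}=\mu(2)=-1$, $c_{21}=0$, $\delta_1=f(1)$, $\delta_2=f(2)-f(1)$, and your right-hand side for $i=j=1$ is $\delta_1+\delta_2=f(2)$, whereas $f((x_1,x_1))=f(1)$. Your proposed auxiliary sublemma $\sum_{x_i\mid x_k}c_{ik}\,[\,e\mid x_k,\ e\nmid x_t\ (x_t<x_k)\,]=[\,e\mid x_i\,]$ fails for the same reason (try $i=1$, $e=2$ in this example: the left side is $-1$, not $0$).

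The Bourque--Ligh argument runs in the opposite direction. One takes the \emph{incidence} matrix $E=(e_{ik})$ with $e_{ik}=1$ if $x_k\mid x_i$ and $0$ otherwise, and proves $(f(S))=EDE^{T}$ via the partition (this is where gcd-closedness is used)
\[
f((x_i,x_j))=\sum_{d\mid(x_i,x_j)}(f*\mu)(d)=\sum_{x_k\mid x_i,\ x_k\mid x_j}\delta_k.
\]
Then $(f(S))^{-1}=(E^{-1})^{T}D^{-1}E^{-1}$, and the remaining step is to identify $(E^{-1})_{ki}=c_{ik}$, i.e.\ to show that the matrix $C=(c_{ij})$ of (\ref{eq2.1}) equals $(E^{T})^{-1}$. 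That M\"obius-type inversion over the poset $(S,\mid)$ is where the specific formula for $c_{ij}$ actually enters. Your observations about triangularity, unit diagonal, and the support condition $c_{ij}=0$ unless $x_i\mid x_j$ are all correct and are exactly what one needs; it is only the role of $C$---it is the inverse of the incidence factor, not the factor itself---that you have reversed.
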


\begin{lemma}\label{L:2.1}
If $S$ is gcd closed, then the power GCD matrix $(S^a)$
is nonsingular and for arbitrary integers $i$ and
$j$ with $1\le i, j\le n$, one has
\begin{align*}
((S^a)^{-1})_{ij}:=\sum_{x_i|x_k\atop x_j|x_k}
\frac{c_{i k}c_{j k}}{\alpha _{a,k}}
\end{align*}
with $c_{ij}$ being defined as in {\rm(\ref{eq2.1})}
and $\alpha_{a, k}$ being defined as in {\rm(\ref{eq1.2})}.
\end{lemma}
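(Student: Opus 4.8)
The statement to be proved is Lemma \ref{L:2.1}, which specializes Bourque--Ligh's Lemma \ref{L:2.20} from an arbitrary arithmetic function $f$ with $(f(S))$ nonsingular to the power function $f=\xi_a$. The plan is to deduce Lemma \ref{L:2.1} directly from Lemma \ref{L:2.20} in two moves. First I would verify that $(S^a)=(\xi_a(S))$ is nonsingular whenever $S$ is gcd closed; this is immediate from the Bourque--Ligh determinant formula $\det(S^a)=\prod_{k=1}^{n}\alpha_{a,k}$ recorded in the introduction (displayed as \eqref{eq1.2}), so it suffices to show each $\alpha_{a,k}\neq0$. The key observation is that $\alpha_{a,k}=\sum_{d\mid x_k,\ d\nmid x_t,\,x_t<x_k}(\xi_a*\mu)(d)$ has a positive leading term: $x_k$ itself always divides $x_k$ and cannot divide any strictly smaller $x_t$, so $x_k$ is among the summands, contributing $(\xi_a*\mu)(x_k)=\sum_{e\mid x_k}e^{a}\mu(x_k/e)>0$ since $\xi_a*\mu$ is the (multiplicative) Euler-type totient $J_a$, which is strictly positive on the positive integers. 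More carefully, one groups all the divisors $d$ appearing in $\alpha_{a,k}$ and shows the whole sum equals a sum of values of the positive multiplicative function $J_a=\xi_a*\mu$ over a suitable divisor-closed-in-$S$ structure, or one simply invokes the known fact (Bourque--Ligh) that $\alpha_{a,k}>0$ for gcd-closed $S$; either way nonsingularity follows.

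Second, with nonsingularity in hand, I would apply Lemma \ref{L:2.20} with $f=\xi_a$. The quantity $\delta_k$ in Lemma \ref{L:2.20} becomes $\sum_{d\mid x_k,\,d\nmid x_t,\,x_t<x_k}(\xi_a*\mu)(d)$, which is exactly $\alpha_{a,k}$ as defined in \eqref{eq1.2}; and the coefficients $c_{ij}$ in \eqref{eq2.1} do not depend on $f$ at all. Substituting these identifications into the formula $((f(S))^{-1})_{ij}=\sum_{x_i\mid x_k,\ x_j\mid x_k} c_{ik}c_{jk}/\delta_k$ yields precisely
\[
((S^a)^{-1})_{ij}=\sum_{x_i\mid x_k,\ x_j\mid x_k}\frac{c_{ik}c_{jk}}{\alpha_{a,k}},
\]
which is the asserted formula. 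So the proof is essentially a bookkeeping argument: recognize that Lemma \ref{L:2.20} applies, and match notation.

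The only genuine content — hence the main (minor) obstacle — is the nonsingularity claim, i.e.\ ensuring $\alpha_{a,k}\neq0$ for every $k$. If one wants a self-contained argument rather than citing Bourque--Ligh, the cleanest route is to show $(\xi_a*\mu)(d)=J_a(d)>0$ for all $d\geq1$ (Jordan's totient is a product of factors $p^{a\nu}-p^{a(\nu-1)}>0$ over prime powers $p^{\nu}\|d$), and then argue that the partial sum $\alpha_{a,k}$ is positive because it can be written as $\sum_{d\in D_k}J_a(d)$ for a nonempty set $D_k$ of divisors of $x_k$ containing $x_k$ — or, more robustly, observe that $\alpha_{a,k}$ equals $J_a$ summed over divisors of $x_k$ that are not divisors of any smaller element, a set which telescopes correctly because $x_k\in D_k$ guarantees the sum is bounded below by $J_a(x_k)>0$ after checking no cancellation occurs (here one uses that $\sum_{d\mid m}J_a(d)=m^a$ and a Möbius/inclusion--exclusion identity to rewrite $\alpha_{a,k}$ as a difference of $a$th powers that stays positive). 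In any event, since the determinant formula \eqref{eq1.2} and the positivity of $\alpha_{a,k}$ for gcd-closed $S$ are already in the literature and effectively quoted in the introduction, the lemma follows with no new difficulty; the proof can be written in a few lines.
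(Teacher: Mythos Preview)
Your proposal is correct and follows essentially the same route as the paper: establish nonsingularity of $(S^a)$ and then specialize Lemma~\ref{L:2.20} to $f=\xi_a$, identifying $\delta_k$ with $\alpha_{a,k}$. The only cosmetic difference is in the nonsingularity step: the paper simply cites Bourque--Ligh's result that $(S^a)$ is positive definite on gcd-closed sets, whereas you argue via $\det(S^a)=\prod_k\alpha_{a,k}$ and the positivity of $J_a=\xi_a*\mu$; note that your argument here can be streamlined, since $\alpha_{a,k}$ is already a sum of values $J_a(d)>0$ over a nonempty set of divisors (containing $x_k$), so positivity is immediate without any inclusion--exclusion or ``no cancellation'' discussion.
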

\begin{proof} From \cite[Example 1 (ii)]{BL-LMA93},
one knows that the power GCD matrix $(S^a)$
is positive definite, and so is nonsingular. Then
Lemma \ref{L:2.20} applied to $f=\xi_a$ gives us
the expected result. 
\end{proof}

We also need the following Hong's formula for
the determinant of the power LCM matrix on a gcd-closed set.
For any positive integer $x$,
the function $\frac{1}{\xi_a}$ is defined by
$\frac{1}{\xi_a}(x)=\frac{1}{x^a}$.

\begin{lemma}{\rm\cite[Lemma 2.1]{H-AA04}}\label{L:2.2}
If $S$ is gcd closed, then
\begin{align}\label{eq2.2}
\det[S^a]=\prod\limits_{k=1}^n x_k^{2a}\beta_{a,k},
\end{align}
where
\begin{align}\label{eq2.3}
\beta_{a,k}:={\underset{{d|x_k\atop d\nmid x_t,x_t<x_k}}{\sum}}\Big(\frac{1}{\xi_a}*\mu\Big)(d).
\end{align}
\end{lemma}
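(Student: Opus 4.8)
The plan is to reduce the power LCM determinant to a power GCD-type determinant via the elementary identity $[x,y]=xy/(x,y)$, and then invoke the Bourque--Ligh determinant formula for gcd-closed sets. Writing $[x_i,x_j]^a = (x_ix_j)^a/(x_i,x_j)^a = x_i^a x_j^a\cdot\frac{1}{\xi_a}\big((x_i,x_j)\big)$, I observe that the $(i,j)$-entry of $[S^a]$ factors as a diagonal scaling of the GCD-type matrix associated to the arithmetic function $\frac{1}{\xi_a}$ introduced just above.

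Concretely, first I would set $D:=\mathrm{diag}(x_1^a,\dots,x_n^a)$ and record the matrix factorization
$$[S^a] = D\,\Big(\tfrac{1}{\xi_a}(S)\Big)\,D,$$
where $\big(\tfrac{1}{\xi_a}(S)\big)_{ij}=\frac{1}{(x_i,x_j)^a}$ is exactly the matrix $(f(S))$ with $f=\frac{1}{\xi_a}$. Taking determinants and using multiplicativity gives
$$\det[S^a] = (\det D)^2\,\det\Big(\tfrac{1}{\xi_a}(S)\Big) = \Big(\prod_{k=1}^n x_k^a\Big)^{2}\det\Big(\tfrac{1}{\xi_a}(S)\Big) = \prod_{k=1}^n x_k^{2a}\,\det\Big(\tfrac{1}{\xi_a}(S)\Big).$$
Next, since $S$ is gcd closed, I would apply the Bourque--Ligh determinant formula for GCD-type matrices on gcd-closed sets (the general result of \cite{BL-LMA93} underlying Lemma \ref{L:2.20}, taken with $f=\frac{1}{\xi_a}$). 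This yields $\det\big(\frac{1}{\xi_a}(S)\big)=\prod_{k=1}^n\delta_k$ with $\delta_k=\sum_{d|x_k,\,d\nmid x_t,\,x_t<x_k}\big(\frac{1}{\xi_a}*\mu\big)(d)$, which is precisely $\beta_{a,k}$ as defined in (\ref{eq2.3}). Substituting back gives the claimed identity $\det[S^a]=\prod_{k=1}^n x_k^{2a}\beta_{a,k}$.

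This argument is essentially a clean reduction, so there is no serious obstacle; the one point meriting care is the applicability of the Bourque--Ligh formula to the particular non-integral function $f=\frac{1}{\xi_a}$ on a merely gcd-closed (as opposed to factor-closed) set $S$. This is covered by their general gcd-closed determinant theorem, which makes no positivity or integrality assumption on $f$, so no additional hypothesis is required. Everything else is routine: the factorization $[S^a]=D\big(\frac{1}{\xi_a}(S)\big)D$ is immediate from $[x,y]=xy/(x,y)$, and the determinant of a diagonally conjugated matrix is the product of the squared diagonal entries times the determinant of the central factor.
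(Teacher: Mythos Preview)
Your argument is correct. The paper does not supply its own proof of this lemma; it is simply quoted from \cite[Lemma 2.1]{H-AA04}. Your reduction via $[S^a]=D\big(\frac{1}{\xi_a}(S)\big)D$ and the Bourque--Ligh gcd-closed determinant formula is the standard one, and indeed the paper uses exactly this factorization (equation \eqref{eq2.4}) in the proof of Lemma~\ref{L:2.8}, where it runs your computation in reverse to recover $\det\big(\frac{1}{\xi_a}(S)\big)=\prod_k\beta_{a,k}$ from the already-cited Lemma~\ref{L:2.2}.
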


\begin{lemma}\label{L:2.3}
Let $S$ be gcd closed and $\max_{x\in S}\{|G_S(x)|\}=2$.
Let $\alpha _{a,k}$ and $\beta_{a,k}$ be given as
in {\rm(\ref{eq1.2})} and {\rm(\ref{eq2.3})}, respectively.
Then $\alpha_{a,1}=x_1^a$ and $\beta_{a,1}=x_1^{-a}$, and for any integer $m$ with $2\le m\le n$,
we have
\begin{align*}
\alpha _{a,m}=\left\{
\begin{aligned}
&x_m^a-x_{m_0}^a\ &\hbox{if}\ &G_S(x_m)=\{x_{m_0}\},\\
&x_m^a-x_{m_1}^a-x_{m_2}^a+x_{m_3}^a\ &\hbox{if}\ &G_S(x_m)=\{x_{m_1},\ x_{m_2}\}\ \hbox{and}\
x_{m_3}=(x_{m_1},\ x_{m_2})
\end{aligned}
\right.
\end{align*}
and
\begin{align*}
\beta _{a,m}=\left\{
\begin{aligned}
&{x_m^{-a}}-{x_{m_0}^{-a}}\ &\hbox{if}\ &G_S(x_m)=\{x_{m_0}\},\\
&x_m^{-a}-x_{m_1}^{-a}-x_{m_2}^{-a}+x_{m_3}^{-a}\ &\hbox{if}\ &G_S(x_m)=\{x_{m_1},\ x_{m_2}\}\ \hbox{and}\
x_{m_3}=(x_{m_1},\ x_{m_2}).
\end{aligned}
\right.
\end{align*}
\end{lemma}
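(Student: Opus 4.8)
The plan is to compute the quantities $\alpha_{a,m}$ and $\beta_{a,m}$ directly from their defining sums \eqref{eq1.2} and \eqref{eq2.3} by analyzing the set $D_m:=\{d\in\mathbf Z^+: d\mid x_m,\ d\nmid x_t\text{ for all }x_t<x_m\}$ over which each sum ranges. First I would record the elementary identity $\sum_{d\mid x}(\xi_a*\mu)(d)=x^a$ and $\sum_{d\mid x}(\tfrac1{\xi_a}*\mu)(d)=x^{-a}$, valid for every positive integer $x$, since $\xi_a*\mu*\mathbbm 1=\xi_a$ (Möbius inversion); this lets me convert the restricted sum over $D_m$ into an inclusion-exclusion over the ``forbidden'' divisors. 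Concretely, $\alpha_{a,m}=\sum_{d\mid x_m}(\xi_a*\mu)(d)-\sum_{d\mid x_m,\ d\in E_m}(\xi_a*\mu)(d)$, where $E_m:=\{d: d\mid x_m,\ d\mid x_t\text{ for some }x_t<x_m\}=\bigcup_{x_t<x_m,\ x_t\mid x_m}\{d:d\mid x_t\}$ (using that $d\mid x_m$ and $d\mid x_t$ forces $d\mid(x_m,x_t)$, which lies in $S$ and is $<x_m$, so it suffices to take the union over divisors $x_t$ of $x_m$ in $S$ strictly below $x_m$). The union defining $E_m$ can be reduced to the union over the \emph{greatest-type} divisors of $x_m$, because every proper divisor $x_t$ of $x_m$ in $S$ divides some greatest-type divisor of $x_m$; hence $E_m=\bigcup_{y\in G_S(x_m)}\{d:d\mid y\}$.

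Next I would split into the three cases according to $|G_S(x_m)|$. The case $m=1$: here $x_1$ is the minimum of $S$, so no $x_t<x_1$ exists, $E_1=\emptyset$, and the full-divisor identity gives $\alpha_{a,1}=x_1^a$, $\beta_{a,1}=x_1^{-a}$ at once. The case $G_S(x_m)=\{x_{m_0}\}$: then $E_m=\{d:d\mid x_{m_0}\}$, so by the full-divisor identity applied to $x_m$ and to $x_{m_0}$, $\alpha_{a,m}=x_m^a-x_{m_0}^a$ and likewise $\beta_{a,m}=x_m^{-a}-x_{m_0}^{-a}$. The case $G_S(x_m)=\{x_{m_1},x_{m_2}\}$: by inclusion--exclusion on the union of the two divisor lattices,
\[
\sum_{d\in E_m}(\xi_a*\mu)(d)=\sum_{d\mid x_{m_1}}(\xi_a*\mu)(d)+\sum_{d\mid x_{m_2}}(\xi_a*\mu)(d)-\sum_{d\mid(x_{m_1},x_{m_2})}(\xi_a*\mu)(d),
\]
since $\{d:d\mid x_{m_1}\}\cap\{d:d\mid x_{m_2}\}=\{d:d\mid(x_{m_1},x_{m_2})\}$, and because $S$ is gcd closed the element $x_{m_3}:=(x_{m_1},x_{m_2})$ is in $S$. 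Applying the full-divisor identity to each of $x_m,x_{m_1},x_{m_2},x_{m_3}$ yields $\alpha_{a,m}=x_m^a-x_{m_1}^a-x_{m_2}^a+x_{m_3}^a$, and the identical computation with $\tfrac1{\xi_a}$ in place of $\xi_a$ gives the stated formula for $\beta_{a,m}$.

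The one point that needs genuine care — and which I expect to be the main obstacle — is justifying the reduction $E_m=\bigcup_{y\in G_S(x_m)}\{d:d\mid y\}$, i.e. that it suffices to run the inclusion--exclusion over the (at most two) greatest-type divisors rather than over all smaller elements of $S$ dividing $x_m$. This uses two facts: (a) if $x_t\in S$ with $x_t<x_m$ and $x_t\mid x_m$, then $x_t$ divides some $y\in G_S(x_m)$ (walk up a maximal chain in $S$ from $x_t$ toward $x_m$), so $\{d:d\mid x_t\}\subseteq\{d:d\mid y\}$; and (b) if $x_t\mid x_m$ but $x_t\nmid x_m$, then any common divisor $d$ of $x_t$ and $x_m$ satisfies $d\mid(x_t,x_m)\in S$ with $(x_t,x_m)\le x_t<x_m$, so such $d$ are already covered by the divisors of elements below $x_m$. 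Once this set identity is in hand, the rest is the mechanical Möbius/inclusion--exclusion bookkeeping above; note that the hypothesis $\max_{x\in S}\{|G_S(x)|\}=2$ is exactly what guarantees we never need more than the two-set inclusion--exclusion, and that no further consequence of the condition $\mathcal G$ is needed for this lemma (it will be used later, in the proof of Theorem \ref{t1.2}).
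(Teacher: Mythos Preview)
Your argument is correct. The paper itself does not give a proof at all: it simply invokes \cite[Theorem~1.2]{H-JA04} and states that the lemma follows directly. What you have written is essentially a self-contained derivation of that cited result in the special case $|G_S(x_m)|\le 2$: you use the M\"obius identity $\sum_{d\mid x}(f*\mu)(d)=f(x)$ together with the observation that the ``forbidden'' divisors form $E_m=\bigcup_{y\in G_S(x_m)}\{d:d\mid y\}$, and then apply two-set inclusion--exclusion. The reduction of $E_m$ to greatest-type divisors is justified exactly as you indicate (walk up a maximal chain in $S$ from any proper divisor $x_t\mid x_m$ to some $y\in G_S(x_m)$, using gcd-closure to pass from arbitrary $x_t<x_m$ to divisors of $x_m$). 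One small typo: in your point~(b) you wrote ``if $x_t\mid x_m$ but $x_t\nmid x_m$'', where you evidently meant ``if $x_t<x_m$ but $x_t\nmid x_m$''. Compared to the paper's one-line citation, your approach has the advantage of being self-contained and of making transparent exactly where the hypothesis $\max_{x\in S}|G_S(x)|=2$ enters (it caps the inclusion--exclusion at two sets); the paper's approach simply defers to the general formula in \cite{H-JA04}, which covers arbitrary $|G_S(x_m)|$ via the same inclusion--exclusion mechanism.
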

\begin{proof}
Employing \cite[Theorem 1.2]{H-JA04}, we directly get Lemma \ref{L:2.3}.
\end{proof}

In what follows, we recall several basic results
on the gcd-closed sets.
\begin{lemma}{\rm\cite[Lemma 2.3]{FHZ-DM09}}\label{L:2.4}
Let S be a gcd-closed
set of $n\ge 2$ distinct positive integers and let
$c_{ij}$ be defined as in {\rm(\ref{eq2.1})}. Then
$$
c_{r1}=\left\{\begin{aligned}
{1}& \quad if\ r=1,\\
{0}&\quad otherwise.
\end{aligned}
\right.
$$
If $2\le m\le n$ and $G_S(x_m)=\{x_{m_0}\}$, then
$$
c_{rm}=\left\{
\begin{aligned}
{-1}&\quad if\ r=m_0,\\
{1}&\quad if\ r=m,\\
{0}&\quad otherwise.
\end{aligned}
\right.
$$
If $3\le m\le n$ and $G_S(x_m)=\{x_{m_1},\ x_{m_2}\}$ and $x_{m_3}=(x_{m_1},\ x_{m_2})$, then
$$
c_{rm}=\left\{
\begin{aligned}
{-1}&\quad if \ r=m_1\ or\ r=m_2,\\
{1}&\quad if\ r=m\ or\ m_3,\\
{0}&\quad otherwise.
\end{aligned}
\right.
$$
\end{lemma}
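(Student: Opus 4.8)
The plan is to compute $c_{rm}$ directly from its defining sum \eqref{eq2.1} by reinterpreting the summation condition and then applying M\"obius inclusion--exclusion. First I would record the basic reduction: writing $y=dx_r$, the constraints $dx_r\mid x_m$ and $dx_r\nmid x_t$ for all $x_t<x_m$ say exactly that $y$ is a multiple of $x_r$ dividing $x_m$ for which $x_m$ is the \emph{smallest} element of $S$ divisible by $y$. In particular $c_{rm}$ vanishes unless $x_r\mid x_m$, and $c_{mm}=\mu(1)=1$ since $x_m$ is trivially the least element of $S$ that it divides. This already settles the case $r=m$; and for $m=1$, minimality of $x_1$ forces $x_r\mid x_1\Rightarrow x_r=x_1$ and $d=1$, giving $c_{r1}=[\,r=1\,]$, which is the first displayed table.

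The key structural step, where gcd-closedness enters, is the equivalence: for $y\mid x_m$, the integer $x_m$ fails to be the least element of $S$ divisible by $y$ if and only if $y$ divides some greatest-type divisor of $x_m$. Indeed, if $y\mid x_t$ for some $x_t<x_m$ then $y\mid(x_m,x_t)$, which lies in $S$ by gcd-closedness and is a proper divisor of $x_m$ in $S$; conversely every proper divisor of $x_m$ in $S$ is smaller than $x_m$. I would then note that every proper divisor $x_s$ of $x_m$ in $S$ divides some element of $G_S(x_m)$: choosing a divisibility-maximal $x_t\in S$ with $x_s\mid x_t\mid x_m$ and $x_t\neq x_m$ produces a greatest-type divisor lying above $x_s$. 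Combining these, the condition defining $c_{rm}$ becomes simply $dx_r\mid x_m$ together with $dx_r\nmid z$ for every $z\in G_S(x_m)$.

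With this reformulation the computation is a clean inclusion--exclusion. Writing $q=x_m/x_r$ and expanding $\prod_{z\in G_S(x_m)}\bigl(1-[\,dx_r\mid z\,]\bigr)$ gives
\[
c_{rm}=\sum_{T\subseteq G_S(x_m)}(-1)^{|T|}\sum_{\substack{d\mid q\\ dx_r\mid\gcd(T)}}\mu(d),
\]
where the $T=\varnothing$ term is the unconstrained sum $\sum_{d\mid q}\mu(d)=[\,x_r=x_m\,]$. For nonempty $T$, each $z\in T$ divides $x_m$, so $\gcd(T)\mid x_m$ and the constraint $dx_r\mid\gcd(T)$ already forces $d\mid q$; hence the inner sum collapses to $\sum_{d\mid\gcd(T)/x_r}\mu(d)=[\,x_r=\gcd(T)\,]$. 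In the case $G_S(x_m)=\{x_{m_0}\}$ this yields $c_{rm}=[\,x_r=x_m\,]-[\,x_r=x_{m_0}\,]$, and in the case $G_S(x_m)=\{x_{m_1},x_{m_2}\}$ the lone nontrivial intersection term is governed by $dx_r\mid x_{m_1}$ and $dx_r\mid x_{m_2}\iff dx_r\mid(x_{m_1},x_{m_2})=x_{m_3}$, giving $c_{rm}=[\,x_r=x_m\,]-[\,x_r=x_{m_1}\,]-[\,x_r=x_{m_2}\,]+[\,x_r=x_{m_3}\,]$. Reading off the four distinct values $x_m,x_{m_1},x_{m_2},x_{m_3}$ recovers the remaining two tables.

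The main obstacle is the structural equivalence in the second paragraph: expressing the exclusion condition purely in terms of the greatest-type divisors requires both directions of the gcd-closedness argument (descending to $(x_m,x_t)\in S$, then climbing to a greatest-type divisor), and in the two-divisor case one must recognize that the simultaneous-divisibility term is exactly the gcd $x_{m_3}=(x_{m_1},x_{m_2})$, which is precisely where the hypothesis that $S$ is gcd-closed is indispensable. Once this is established the M\"obius bookkeeping is routine.
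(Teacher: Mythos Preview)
Your argument is correct. The reduction to the condition ``$dx_r\mid x_m$ and $dx_r\nmid z$ for every $z\in G_S(x_m)$'' is the heart of the matter, and your justification---descending via gcd-closedness to a proper divisor of $x_m$ in $S$, then climbing to a greatest-type divisor by divisibility-maximality---is exactly right. The M\"obius inclusion--exclusion that follows is clean; the only small point worth making explicit is that in the two-divisor case the four values $x_m,x_{m_1},x_{m_2},x_{m_3}$ are genuinely distinct (if $x_{m_3}$ equalled $x_{m_1}$, say, then $x_{m_1}\mid x_{m_2}\mid x_m$ would contradict $x_{m_1}\in G_S(x_m)$), so the Iverson brackets do not collide.

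As for comparison with the paper: the present paper does not prove Lemma~\ref{L:2.4} at all---it simply quotes it from \cite[Lemma~2.3]{FHZ-DM09}. Your write-up therefore supplies a self-contained proof where the paper has none, and the approach you take (recasting the exclusion condition via greatest-type divisors, then inclusion--exclusion over subsets of $G_S(x_m)$) is in fact the standard one used in the cited source.
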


\begin{lemma}{\rm\cite{FHZ-DM09}}\label{L:2.6}
Let $S$ be a gcd-closed set satisfying
$\max_{x\in S}\{|G_S(x)|\}=2$ and let $x\in S$ satisfy
$|G_S(x)|=2$ and $y\in G_S(x)$. Let $z\in S$ be such that
$z|x,\ z\neq x$ and $z\nmid y$. If $A:=\{u\in S: z|u|x,\ u\neq z\}$
satisfies the condition $\mathcal{G}$,
then $[y,z]=x$.



\end{lemma}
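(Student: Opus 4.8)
The plan is to prove the statement by strong induction on $x$ (viewed as a positive integer), establishing it simultaneously for every triple $(x,y,z)$ satisfying the hypotheses. Since $|G_S(x)|=2$, write $G_S(x)=\{y,y_*\}$ with $y_*\ne y$. The first step is the structural claim $z\mid y_*$. To prove it, put $B:=\{v\in S:\ z\mid v\mid x,\ v\ne x\}$, a nonempty finite set containing $z$. If $B=\{z\}$, then every $e\in S$ with $z\mid e\mid x$ lies in $\{z,x\}$, so $z\in G_S(x)$; since $z\nmid y$ this forces $z=y_*$. Otherwise choose $u$ maximal for divisibility in $B\setminus\{z\}$: any $e\in S$ with $u\mid e\mid x$ and $e\ne x$ then lies in $B\setminus\{z\}$ and is a multiple of $u$, hence equals $u$, so $u\in G_S(x)$; as $z\mid u$ while $z\nmid y$, we get $u=y_*$ and again $z\mid y_*$. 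This is a routine maximal-chain argument for gcd-closed sets and I do not expect difficulty here.

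Next I would use that $x\in A$, so by hypothesis $x$ satisfies the condition $\mathcal G$; since $|G_S(x)|=2$, this gives simultaneously $[y,y_*]=x$ and $g:=(y,y_*)\in G_S(y)\cap G_S(y_*)$. If $z=y_*$, we are done at once, because $[y,z]=[y,y_*]=x$. So assume $z\mid y_*$ with $z\ne y_*$; the idea is to apply the induction hypothesis to the triple $(y_*,g,z)$. One checks: $g\in G_S(y_*)$ and $g\ne y_*$; $z\mid y_*$ and $z\ne y_*$; $z\nmid g$, since $g=(y,y_*)$ divides $y$ whereas $z\nmid y$; the set $\{u\in S:\ z\mid u\mid y_*,\ u\ne z\}$ is contained in $A$ (such $u$ also divides $x$) and hence also satisfies $\mathcal G$; and $|G_S(y_*)|=2$, because the maximal-element construction of the first step, now run among the divisors of $y_*$, yields a greatest-type divisor of $y_*$ that is a multiple of $z$, necessarily distinct from $g$ since $z\nmid g$, so $|G_S(y_*)|\ge 2$, with equality forced by the global hypothesis $\max_{x'\in S}\{|G_S(x')|\}=2$. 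As $y_*<x$, the induction hypothesis gives $[g,z]=y_*$.

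Finally I would assemble the pieces: since $g\mid y$ we have $[y,[g,z]]=[y,g,z]=[y,z]$, so combining with $[g,z]=y_*$ (induction) and $[y,y_*]=x$ (second step) yields $[y,z]=x$. This closes the induction, and the recursion terminates because at each recursive call the first coordinate strictly decreases while remaining a proper multiple of $z$. I expect the only genuine work to be the bookkeeping in the inductive step — checking that $(y_*,g,z)$ meets every hypothesis of the lemma, and in particular pinning down $|G_S(y_*)|=2$ together with the containment of the auxiliary set in $A$ — rather than any delicate estimate; once these are in place, the lcm identity closing the argument is immediate from $g\mid y$.
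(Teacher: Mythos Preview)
The paper does not give its own proof of this lemma; it is quoted from \cite{FHZ-DM09} and stated without argument, so there is no in-paper proof to compare against. Your induction on $x$ is correct and complete: the maximal-element argument in $B$ yields $z\mid y_*$; membership $x\in A$ together with the hypothesis on $A$ gives $[y,y_*]=x$ and $g=(y,y_*)\in G_S(y)\cap G_S(y_*)$; the triple $(y_*,g,z)$ meets every hypothesis of the lemma (in particular $|G_S(y_*)|=2$ via a second maximal-element argument, $z\nmid g$ since $g\mid y$, and the auxiliary set $\{u\in S:z\mid u\mid y_*,\ u\ne z\}$ is contained in $A$); and the closing identity $[y,z]=[y,[g,z]]=[y,y_*]=x$ follows from $g\mid y$.
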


\begin{lemma}{\rm\cite[Lemma 2.2]{H-AC06}}\label{L:2.7}
Let $S$ be gcd closed such that $\max_{x\in S}\{|G_S(x)|\}=2$
and $|S|=n$. Let $\beta _{a,k}$ be defined as in {\rm (\ref{eq2.3})}.
Then $\beta _{a,k}\neq0$ for any integer $k$ with $1\le k\le n$.
\end{lemma}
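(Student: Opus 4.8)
The plan is to reduce the whole statement to the explicit evaluations of $\beta_{a,k}$ already supplied by Lemma \ref{L:2.3}, and then to establish a definite sign for each resulting expression by elementary estimates. Since $\max_{x\in S}\{|G_S(x)|\}=2$, every element $x_k$ falls into exactly one of three cases according to whether $k=1$, $|G_S(x_k)|=1$, or $|G_S(x_k)|=2$, and Lemma \ref{L:2.3} gives a closed form in each. The first two cases are immediate. For $k=1$ we have $\beta_{a,1}=x_1^{-a}>0$. If $G_S(x_m)=\{x_{m_0}\}$ then $\beta_{a,m}=x_m^{-a}-x_{m_0}^{-a}$, which is strictly negative because $x_{m_0}$ is a proper divisor of $x_m$, so $x_{m_0}<x_m$ forces $x_{m_0}^{-a}>x_m^{-a}$. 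Hence $\beta_{a,k}\neq 0$ in both of these cases.

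The substantive case is $G_S(x_m)=\{x_{m_1},x_{m_2}\}$ with $x_{m_3}=(x_{m_1},x_{m_2})$, where Lemma \ref{L:2.3} gives
\begin{align*}
\beta_{a,m}=x_m^{-a}-x_{m_1}^{-a}-x_{m_2}^{-a}+x_{m_3}^{-a}.
\end{align*}
Write $g=x_{m_3}$, $p=x_{m_1}$, $q=x_{m_2}$ and $M=x_m$. Because $x_{m_1}$ and $x_{m_2}$ are distinct greatest-type divisors of $x_m$, neither divides the other (otherwise the greatest-type divisor property of the smaller would be violated by the larger), so $g=(p,q)$ is a \emph{proper} divisor of both $p$ and $q$. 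Writing $p=gs$ and $q=gt$ we therefore obtain $s,t\ge 2$ and $(s,t)=1$. Since $p\mid M$ and $q\mid M$, the least common multiple $[p,q]=gst$ divides $M$, so $M=gstr$ for some integer $r\ge 1$. Factoring out $g^{-a}$ then yields
\begin{align*}
\beta_{a,m}=g^{-a}\big(1-s^{-a}-t^{-a}+(str)^{-a}\big).
\end{align*}
When $r=1$ the bracket equals $(1-s^{-a})(1-t^{-a})$, which already explains positivity in the condition-$\mathcal G$ situation; for general $r$ a uniform lower bound is needed instead.

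It remains to show the factor $F:=1-s^{-a}-t^{-a}+(str)^{-a}$ is positive for all admissible $s,t,r,a$. Since $(str)^{-a}>0$, it suffices to prove $s^{-a}+t^{-a}<1$, and this is exactly where coprimality is used: the smallest pair of coprime integers each at least $2$ is $\{2,3\}$, so assuming WLOG $s<t$ we get $s\ge 2$ and $t\ge 3$, whence $s^{-a}+t^{-a}\le 2^{-a}+3^{-a}\le \tfrac12+\tfrac13=\tfrac56<1$ for every $a\ge 1$. Thus $F\ge 1-s^{-a}-t^{-a}\ge \tfrac16>0$ and $\beta_{a,m}=g^{-a}F>0$. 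Combining the three cases gives $\beta_{a,k}\neq 0$ for every $k$. The only real obstacle is the two-divisor case, and within it the single delicate input is the inequality $s^{-a}+t^{-a}<1$; everything else is sign bookkeeping. I would take care to verify that the greatest-type divisor hypothesis genuinely forces $s,t\ge 2$ and $(s,t)=1$, since the bound $\tfrac56<1$ collapses if either $s$ or $t$ could equal $1$.
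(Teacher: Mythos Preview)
The paper does not supply its own proof of this lemma; it simply cites it as \cite[Lemma 2.2]{H-AC06}. Your argument is correct and gives a self-contained proof relative to the explicit formulas already recorded in Lemma~\ref{L:2.3}. The only nontrivial step is the two-divisor case, and your handling of it is sound: since $x_{m_1}$ and $x_{m_2}$ are distinct greatest-type divisors of $x_m$, neither can divide the other (else the smaller would fail the greatest-type condition via the larger), so with $p=gs$, $q=gt$ you genuinely obtain $s,t\ge 2$, $(s,t)=1$, and hence $\{s,t\}$ contains an element $\ge 3$, giving $s^{-a}+t^{-a}\le 2^{-a}+3^{-a}\le \tfrac{5}{6}<1$. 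The positive term $(str)^{-a}$ then secures $\beta_{a,m}>0$. The explicit constant $\tfrac{1}{6}$ in your final bound is correct but not needed; strict positivity suffices.
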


\begin{lemma}\label{L:2.8}
Let $S$ be a gcd-closed set satisfying $\max_{x\in S}
\{|G_S(x)|\}=2$. Then the $a$th power LCM matrix $[S^a]$
is nonsingular and for all integers $i$ and $j$ with
$1\le i, j\le n$, one has
$$([S^a]^{-1})_{ij}:=\frac{1}{x_i^a x_j^a}{\underset{x_i|x_k\atop x_j|x_k}{\sum}}\frac{c_{ik}c_{jk}}{\beta_{a,k}}$$
with $c_{ij}$
being defined as in {\rm (\ref{eq2.1})} and $\beta_{a,k}$
being defined as in {\rm (\ref{eq2.3})}.
\end{lemma}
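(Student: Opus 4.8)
The plan is to reduce the claim to the Bourque--Ligh inverse formula of Lemma \ref{L:2.20}, applied to the arithmetic function $f=\frac{1}{\xi_a}$, after peeling off a diagonal factor. First I would record the elementary identity $[x_i,x_j]^a=\frac{x_i^ax_j^a}{(x_i,x_j)^a}$, which yields the matrix factorization
\begin{align*}
[S^a]=D\,\Big(\tfrac{1}{\xi_a}(S)\Big)\,D,
\end{align*}
where $D:=\mathrm{diag}(x_1^a,\dots,x_n^a)$ and $\big(\tfrac{1}{\xi_a}(S)\big)$ denotes the $n\times n$ matrix whose $(i,j)$-entry is $\frac{1}{(x_i,x_j)^a}$. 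Since $D$ is invertible, $[S^a]$ is nonsingular if and only if $\big(\tfrac{1}{\xi_a}(S)\big)$ is, and in that case $[S^a]^{-1}=D^{-1}\big(\tfrac{1}{\xi_a}(S)\big)^{-1}D^{-1}$, so it suffices to compute the inverse of $\big(\tfrac{1}{\xi_a}(S)\big)$.

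For nonsingularity I would combine Hong's determinant formula (Lemma \ref{L:2.2}) with Lemma \ref{L:2.7}: the hypothesis $\max_{x\in S}\{|G_S(x)|\}=2$ guarantees $\beta_{a,k}\neq0$ for every $k$, whence $\det[S^a]=\prod_{k=1}^nx_k^{2a}\beta_{a,k}\neq0$. Taking determinants in the factorization above and using $\det D=\prod_{k=1}^nx_k^a$ then gives $\det\big(\tfrac{1}{\xi_a}(S)\big)=\prod_{k=1}^n\beta_{a,k}\neq0$, so $\big(\tfrac{1}{\xi_a}(S)\big)$ is nonsingular.

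Now I would apply Lemma \ref{L:2.20} with $f=\frac{1}{\xi_a}$. The coefficients $c_{ij}$ appearing there are exactly those of {\rm(\ref{eq2.1})}, since they do not involve $f$; and the quantity $\delta_k$ attached to this $f$ is, by {\rm(\ref{eq2.3})}, precisely $\beta_{a,k}$. Hence
\begin{align*}
\Big(\big(\tfrac{1}{\xi_a}(S)\big)^{-1}\Big)_{ij}=\sum_{x_i|x_k,\,x_j|x_k}\frac{c_{ik}c_{jk}}{\beta_{a,k}}.
\end{align*}
Multiplying on both sides by $D^{-1}=\mathrm{diag}(x_1^{-a},\dots,x_n^{-a})$ scales the $(i,j)$-entry by $x_i^{-a}x_j^{-a}$, which is exactly the claimed expression for $([S^a]^{-1})_{ij}$.

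The computation itself is pure bookkeeping once the factorization $[S^a]=D\big(\tfrac{1}{\xi_a}(S)\big)D$ is in hand; the only nontrivial ingredient is the non-vanishing of the $\beta_{a,k}$, and this is precisely where the restriction $\max_{x\in S}\{|G_S(x)|\}=2$ enters (through Lemma \ref{L:2.7}). Without a bound on $|G_S(x)|$ the power LCM matrix need not be invertible, so the genuine difficulty is already absorbed into the cited lemmas rather than living in the argument above.
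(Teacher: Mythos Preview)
Your proof is correct and follows essentially the same route as the paper: factor $[S^a]=D\big(\tfrac{1}{\xi_a}(S)\big)D$, establish nonsingularity of $\big(\tfrac{1}{\xi_a}(S)\big)$ via Lemmas \ref{L:2.2} and \ref{L:2.7}, then apply Lemma \ref{L:2.20} with $f=\tfrac{1}{\xi_a}$ and conjugate by $D^{-1}$. The paper's argument is identical in substance and in its use of the cited lemmas.
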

\begin{proof}
Since ${[x_i,x_j]}^a=x_i^ax_j^a/{(x_i,x_j)}^a$, we have
\begin{align}\label{eq2.4}
[S^a]={\rm diag}(x_1^a,...,x_n^a)\cdot
\Big(\frac{1}{\xi_a}(x_i,x_j)\Big)\cdot{\rm diag}(x_1^a,...,x_n^a).
\end{align}
Hence
\begin{align}\label{eq2.4'}
\det[S^a]=\det\big(\frac{1}{\xi_a}(x_i,x_j)\big)\cdot
\prod\limits_{k=1}^nx_k^{2a}.
\end{align}
Then from (\ref{eq2.2}) and (\ref{eq2.4'}), we can derive that $$\det\big(\frac{1}{\xi_a}(x_i,x_j)\big)
=\prod\limits_{k=1}^n\beta_{a,k}.$$
Lemma \ref{L:2.7} tells us that $\beta_{a,k}\neq0$
for all positive integers $k\ (\le n)$.
So the matrix $\big(\frac{1}{\xi_a}(x_i,x_j)\big)$
is nonsingular.

Now applying Lemma \ref{L:2.20} to $f=\frac{1}{\xi_a}$,
one gets that
\begin{align}\label{eq2.5}
\Big(\Big(\frac{1}{\xi_a}(x_i,x_j)\Big)^{-1}\Big)_{ij}
={\underset{x_i|x_k\atop x_j|x_k}
{\sum}}\frac{c_{ik}c_{jk}}{\beta_{a,k}}.
\end{align}
Using (\ref{eq2.4}) and (\ref{eq2.5}) gives the required result.
\end{proof}

\begin{lemma} \label{L:2.11}
Let $a$ and $b$ be positive integers such that $a|b$. Let $S$
be a gcd-closed set and $x, y, z\in S$ with $G_S(x)=\{y\}$.
Then each of the following is true:

{\rm (i).} {\rm\cite[Lemma 2.5]{Z-IJNT22}}
The integer $x^a-y^a$ divides each of $(x, z)^b-(y, z)^b$
and $[x,z]^b-[y,z]^b$.

{\rm (ii).} {\rm\cite[Lemma 2.8]{ZL-BAMS22}}
If $r\in S$ and $r|x$, then $y^a[z,x]^b-x^a[z,y]^b$
is divisible by $r^a(y^a-x^a)$.
\end{lemma}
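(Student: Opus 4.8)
\textbf{Proof idea for Lemma \ref{L:2.11}.} The plan is to prove both parts from a single structural observation about an element $x\in S$ with $|G_S(x)|=1$. First I would record the following: if $G_S(x)=\{y\}$ and $d\in S$ satisfies $d\mid x$ and $d\neq x$, then $d\mid y$. Indeed, the set $\{e\in S:d\mid e\mid x,\ e\neq x\}$ is finite and nonempty (it contains $d$), so it has a divisibility‑maximal element $e_0$; maximality forces $e_0$ to be a greatest‑type divisor of $x$ in $S$, hence $e_0=y$ and therefore $d\mid y$. Applying this with $d=(x,z)$ — which lies in $S$ by gcd‑closedness and differs from $x$ exactly when $x\nmid z$ — and combining it with the trivial divisibility $(y,z)\mid(x,z)$ coming from $y\mid x$, I obtain the dichotomy: for every $z\in S$, either $x\mid z$, and then $(x,z)=x$, $(y,z)=y$, $[z,x]=[z,y]=z$; or $x\nmid z$, and then $(x,z)=(y,z)=:g$, so $[z,x]=zx/g$ and $[z,y]=zy/g$ with $z/g\in{\bf Z}^+$.

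For part (i) I would use throughout that $a\mid b$ implies $x^a-y^a\mid x^b-y^b$ (write $x^b-y^b=(x^a)^{b/a}-(y^a)^{b/a}$). If $x\mid z$, then $(x,z)^b-(y,z)^b=x^b-y^b$ and $[z,x]^b-[z,y]^b=0$; if $x\nmid z$, then $(x,z)^b-(y,z)^b=0$ and $[z,x]^b-[z,y]^b=(z/g)^b(x^b-y^b)$. In every case the relevant difference is a multiple of $x^b-y^b$ (or is zero), hence divisible by $x^a-y^a$.

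For part (ii), set $D:=y^a[z,x]^b-x^a[z,y]^b$. In the case $x\mid z$ the dichotomy gives $D=z^b(y^a-x^a)$; since $r\mid x\mid z$ we have $r^a\mid z^a\mid z^b$, so $r^a(y^a-x^a)\mid D$. In the case $x\nmid z$, substituting $[z,x]=zx/g$ and $[z,y]=zy/g$ gives $D=(z/g)^b(y^ax^b-x^ay^b)=(z/g)^b x^ay^a(x^{b-a}-y^{b-a})$. Here the key point is that $a\mid b$ forces $a\mid(b-a)$, so $x^a-y^a\mid x^{b-a}-y^{b-a}$; writing $x^{b-a}-y^{b-a}=(x^a-y^a)N$ with $N\in{\bf Z}$, we get $D=(x^a-y^a)\cdot\big((z/g)^b x^ay^aN\big)$, and since $r\mid x$ gives $r^a\mid x^a$ while $x^a$ divides the second factor, it follows that $r^a(x^a-y^a)\mid D$, i.e. $r^a(y^a-x^a)\mid D$.

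The computation is short once the structural fact is isolated, so the main thing to get right is the case analysis itself — realizing that $|G_S(x)|=1$ collapses the behaviour of $(x,z)$ versus $(y,z)$ into the two clean alternatives above. The only genuinely delicate point is the second case of part (ii): since $r^a$ and $x^a-y^a$ need not be coprime, one cannot argue that each divides $D$ separately, and must instead observe that after cancelling $x^a-y^a$ from $D$ the factor $x^a$ (hence $r^a$) is still visibly present. The algebraic identity $y^ax^b-x^ay^b=x^ay^a(x^{b-a}-y^{b-a})$ together with the implication $a\mid b\Rightarrow a\mid b-a$ are what make this work.
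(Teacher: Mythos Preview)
Your proof is correct and follows essentially the same approach as the cited references (and the paper's own proof of the parallel Lemma~\ref{L:2.9}): split on whether $x\mid z$, use gcd-closedness together with $|G_S(x)|=1$ to force $(x,z)=(y,z)$ when $x\nmid z$, and then reduce each divisibility claim to $x^a-y^a\mid x^c-y^c$ for $c\in\{b,b-a\}$. Your remark that in part~(ii) one must keep the factor $x^a$ visible after cancelling $x^a-y^a$ (rather than arguing $r^a\mid D$ and $x^a-y^a\mid D$ separately) is exactly the right point.
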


\begin{lemma}\label{L:2.9}
Let $a$ and $b$ be positive integers with $a|b$.
Let $S$ be a gcd-closed set with
$\max_{x\in S}\{|G_S(x)|\}=2$ and $z\in S$.
For $x\in S$ with $|G_S(x)|=2$, let
$G_S(x)=\{y_1, y_2\}$ and $y_3:=(y_1, y_2)$.
Assume that the set $\{u\in S: (x,z)|u|x\}$ satisfies
the condition $\mathcal{G}$. Then
each of the following is true:

{\rm (i).} $x^a+y_3^a-y_1^a-y_2^a$ divides each of
$(z,x)^b+(z,y_3)^b-(z,y_1)^b-(z,y_2)^b$ and
$[z,x]^b+[z,y_3]^b-[z,y_1]^b-[z,y_2]^b$.

{\rm (ii).} For any $r\in S$ with $r|x$,
$x^a[z,y_3]^b+y_3^a[z,x]^b-y_1^a[z,y_2]^b-y_2^a[z,y_1]^b$
is divisible by $r^a(x^a+y_3^a-y_1^a-y_2^a)$.
\end{lemma}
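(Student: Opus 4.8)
The plan is to reduce both parts to the single-greatest-type-divisor results of Lemma \ref{L:2.11} by exploiting the structure imposed by the condition $\mathcal{G}$ on the interval $\{u\in S:(x,z)|u|x\}$. First I would record the consequences of that hypothesis: since $G_S(x)=\{y_1,y_2\}$ and $x$ satisfies $\mathcal{G}$, we have $[y_1,y_2]=x$, $y_3=(y_1,y_2)\in G_S(y_1)\cap G_S(y_2)$, and the four elements $y_3,y_1,y_2,x$ sit in the lattice-like configuration $y_3\mid y_1\mid x$, $y_3\mid y_2\mid x$ with $G_S(y_1)=\{y_3\}$ (resp.\ $G_S(y_2)=\{y_3\}$) forced once we also invoke Lemma \ref{L:2.6} (with the roles $z\leftrightarrow (x,z)$) to pin down the greatest-type divisors inside the interval. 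The key algebraic identity I would use is the factorization
\begin{align}\label{eq2.6}
x^a+y_3^a-y_1^a-y_2^a=(y_1^a-y_3^a)\Big(\frac{x^a-y_2^a}{y_1^a-y_3^a}\Big),
\end{align}
which makes sense precisely because $[y_1,y_2]=x$ and $(y_1,y_2)=y_3$ give $x^a y_3^a=y_1^a y_2^a$, hence $x^a-y_2^a=\frac{y_2^a}{y_3^a}(y_1^a-y_3^a)$ and similarly $x^a-y_1^a=\frac{y_1^a}{y_3^a}(y_2^a-y_3^a)$; thus $x^a+y_3^a-y_1^a-y_2^a=\frac{(y_1^a-y_3^a)(y_2^a-y_3^a)}{y_3^a}$, an integer since $y_3^a\mid y_1^a$ and $y_3^a\mid y_2^a$.

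For part (i), set $N:=x^a+y_3^a-y_1^a-y_2^a=\frac{(y_1^a-y_3^a)(y_2^a-y_3^a)}{y_3^a}$ and write the target quantity (the GCD case) as
\begin{align}\label{eq2.7}
(z,x)^b+(z,y_3)^b-(z,y_1)^b-(z,y_2)^b=\big((z,x)^b-(z,y_1)^b\big)-\big((z,y_2)^b-(z,y_3)^b\big).
\end{align}
Because $G_S(x)=\{y_1,y_2\}$ forces $y_1\mid x$ with no intermediate element, and similarly $y_3\mid y_2$ with none intermediate (from $\mathcal{G}$ plus Lemma \ref{L:2.6}), one would like to say $x^a-y_1^a\mid (z,x)^b-(z,y_1)^b$ and $y_2^a-y_3^a\mid (z,y_2)^b-(z,y_3)^b$ by Lemma \ref{L:2.11}(i); but these two divisors need not individually divide $N$. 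The correct route is to pair the terms the other way and use the relations $x^a-y_1^a=\frac{y_1^a}{y_3^a}(y_2^a-y_3^a)$ and $x^a-y_2^a=\frac{y_2^a}{y_3^a}(y_1^a-y_3^a)$: applying Lemma \ref{L:2.11}(i) to the chain-fact $G_S(x)=\{y_1\}$-in-the-sub-poset (which holds after we check $y_1$ is the unique greatest-type divisor of $x$ among elements dividing $x$ and above $y_3$) gives that $x^a-y_1^a$ divides $(z,x)^b-(z,y_1)^b$, hence $\frac{y_2^a-y_3^a}{y_3^a}(y_1^a-y_3^a)\cdot\frac{1}{y_1^a/y_3^a}\cdots$ — this is where the bookkeeping must be done carefully. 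After combining, each of the two bracketed differences in \eqref{eq2.7} becomes divisible by $N$ (using that $y_3^a\mid(z,y_j)^b$ etc. since $b\ge a$ and $y_3\mid y_j$), and the LCM case follows by the same manipulation applied to the $[z,\cdot]$ statements of Lemma \ref{L:2.11}(i), together with the elementary identity $[z,x]^b=\frac{z^b x^b}{(z,x)^b}$ to transfer divisibility across.

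For part (ii), the analogue of Lemma \ref{L:2.11}(ii) is what I would aim for, but with the four-term numerator $M:=x^a[z,y_3]^b+y_3^a[z,x]^b-y_1^a[z,y_2]^b-y_2^a[z,y_1]^b$. I would regroup as $M=\big(x^a[z,y_3]^b-y_1^a[z,y_2]^b\big)+\big(y_3^a[z,x]^b-y_2^a[z,y_1]^b\big)$ — or, more symmetrically, as two applications of Lemma \ref{L:2.11}(ii): once with the pair $(x,y_2)$ in the role of the greatest-type-divisor relation $G(x)=\{y_2\}$-in-sub-poset (giving $r^a(x^a-y_2^a)\mid x^a[z,?]^b-y_2^a[z,?]^b$ type terms) and once with $(y_1,y_3)$. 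The hypothesis $r\mid x$ lets us carry the factor $r^a$ through, and then the product $x^a y_3^a=y_1^a y_2^a$ together with \eqref{eq2.6} glues the two pieces into divisibility by $r^a N$. The main obstacle throughout is the second of the two tasks above: verifying that the interval hypothesis genuinely forces the local poset on $\{y_3,y_1,y_2,x\}$ to be the ``diamond'' so that Lemmas \ref{L:2.6} and \ref{L:2.11} apply with the claimed greatest-type-divisor sets, and then choosing the pairing of the four terms so that the two divisibilities produced by Lemma \ref{L:2.11} actually multiply up to $N$ (rather than to a proper multiple or divisor of it) — this hinges entirely on the identity $x^a y_3^a=y_1^a y_2^a$ and must be exploited at exactly the right moment. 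I would finish by noting that, once (i) and (ii) are in hand, they slot directly into the inverse-matrix formulas of Lemmas \ref{L:2.1} and \ref{L:2.8} in the proof of Theorem \ref{t1.2}.
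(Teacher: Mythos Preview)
Your plan has a genuine gap: the reduction to Lemma \ref{L:2.11} does not go through. That lemma requires $G_S(x)=\{y\}$ in the full set $S$, but here $|G_S(x)|=2$, so you cannot invoke it for the pair $(x,y_1)$ or $(x,y_2)$. Passing to a ``sub-poset'' does not help, because the conclusion $x^a-y^a\mid (z,x)^b-(z,y)^b$ relies on the fact that $(x,z)\in S$ lies either at $x$ or below the \emph{unique} greatest-type divisor $y$; with two greatest-type divisors this trichotomy fails. Likewise, the assertion that $G_S(y_1)=\{y_3\}$ is not justified: condition $\mathcal{G}$ only gives $y_3\in G_S(y_1)\cap G_S(y_2)$, and $y_1$ may well have $|G_S(y_1)|=2$. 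Finally, even granting the individual divisibilities $x^a-y_1^a\mid(z,x)^b-(z,y_1)^b$ and $y_2^a-y_3^a\mid(z,y_2)^b-(z,y_3)^b$, their \emph{difference} need not be divisible by $N=(y_1^a-y_3^a)(y_2^a-y_3^a)/y_3^a$; your sketch trails off (``$\cdots$'') exactly where this would have to be checked, and in general there is no reason it holds without further input.

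The paper's argument is structurally different and avoids Lemma \ref{L:2.11} entirely for this lemma. It sets $d:=(x,z)$ and splits into cases according to where $d$ sits in the diamond $\{y_3,y_1,y_2,x\}$. If $x\mid z$ the claims are immediate. If $d\mid y_1$ and $d\mid y_2$ (hence $d\mid y_3$), then all four gcd's $(z,x),(z,y_1),(z,y_2),(z,y_3)$ coincide, so the gcd-expression is $0$, and the lcm-expressions are handled by the explicit factorization coming from $xy_3=y_1y_2$. If $d$ divides exactly one of $y_1,y_2$ (say $d\mid y_1$, $d\nmid y_2$), then Lemma \ref{L:2.6} applied inside the interval $\{u\in S:d\mid u\mid x\}$ gives $[d,y_2]=x$ and $[d,y_3]=y_1$, whence $[z,x]=[z,y_2]$ and $[z,y_1]=[z,y_3]$; both target expressions collapse to $0$ (or to an explicitly divisible quantity for part (ii)). The hypothesis that $\{u\in S:(x,z)\mid u\mid x\}$ satisfies $\mathcal{G}$ is used precisely to make Lemma \ref{L:2.6} applicable in this last case --- it is not used to force $G_S(y_i)=\{y_3\}$ as your outline suggests.
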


\begin{proof}
Let $d=(x,z)$. If $x|z$, then
$$
(z,x)^b+(z,y_3)^b-(z,y_1)^b-(z,y_2)^b=x^b+y_3^b-y_1^b-y_2^b,
$$
$$
[z,x]^b+[z,y_3]^b-[z,y_1]^b-[z,y_2]^b=z^b+z^b-z^b-z^b=0
$$
and
$$
x^a[z,y_3]^b+y_3^a[z,x]^b-y_1^a[z,y_2]^b-y_2^a[z,y_1]^b=(x^a+y_3^a-y_1^a-y_2^a)z^b.
$$
Since $G_S(x)=\{y_1,\ y_2\}$, $y_3:=(y_1,\ y_2)$
and $x$ satisfies the condition $\mathcal{G}$,
it follows that $xy_3=y_1y_2$. This implies that
for any positive integer $l$, one has
\begin{align*}
x^l+y_3^l-y_1^l-y_2^l
=(y_2^l-y_3^l)\Big(\Big(\frac{x}{y_2}\Big)^l-1\Big).
\end{align*}
So
\begin{align*}
x^b+y_3^b-y_1^b-y_2^b
=(y_2^b-y_3^b)\Big(\Big(\frac{x}{y_2}\Big)^b-1\Big).
\end{align*}
and
\begin{align}\label{*}
x^a+y_3^a-y_1^a-y_2^a
=(y_2^a-y_3^a)\Big(\Big(\frac{x}{y_2}\Big)^a-1\Big).
\end{align}
Since $a|b$, we have
$$
\frac{x^b+y_3^b-y_1^b-y_2^b}{x^a+y_3^a-y_1^a-y_2^a}
=\frac{y_2^b-y_3^b}{y_2^a-y_3^a}
\frac{\Big(\frac{x}{y_2}\Big)^b-1}{\Big(\frac{x}{y_2}\Big)^a-1}\in{\bf Z}.
$$
The statements for parts (i)
and (ii) are clearly true. In what follows, we let $x\nmid z$.
Then $d<x$ and $d\in S$ since $S$ is gcd closed.
The conditions $d|x$ and $G_S(x)=\{y_1, y_2\}$ yield
that either $d|y_1$ or $d|y_2$. One needs only to
consider the following two cases.

{\sc Case 1}. $d|y_1$ and $d|y_2$. Then $d|y_3$. Since
$d|z$, one has $d|(y_3,z)$. However, $y_3|y_1|x$ and
$y_3|y_2|x$. One then derives that $(y_3,z)|(y_1,z)|(x,z)=d$
and $(y_3,z)|(y_2,z)|(x,z)=d$. Then
$(y_3,z)=(y_1,z)=(y_2,z)=(x,z)$ which infers that
$(z,x)^b+(z,y_3)^b-(z,y_1)^b-(z,y_2)^b=0$. Hence
$x^a+y_3^a-y_1^a-y_2^a$ divides
$(z,x)^b+(z,y_3)^b-(z,y_1)^b-(z,y_2)^b$. So the
first statement for part (i) is true in this case.
Moreover, one has
\begin{align}
&[z,x]^b+[z,y_3]^b-[z,y_2]^b-[z,y_1]^b\notag\\
=&\frac{z^bx^b}{(z,x)^b}+\frac{z^by_3^b}{(z,y_3)^b}
-\frac{z^by_2^b}{(z,y_2)^b}-\frac{z^by_1^b}{(z,y_1)^b}\notag \\
=&\frac{z^b}{(z,x)^b}(x^b+y_3^b-y_1^b-y_2^b)\notag\\
=&\frac{z^b}{(z,x)^b}(y_2^b-y_3^b)\Big(\Big(\frac{x}{y_2}\Big)^b-1\Big)
\label{eq2.6}
\end{align}
and
\begin{align}
&x^a[z,y_3]^b+y_3^a[z,x]^b-y_1^a[z,y_2]^b-y_2^a[z,y_1]^b\notag\\
=&x^a\frac{z^by_3^b}{(z,y_3)^b}+y_3^a\frac{z^bx^b}{(z,x)^b}
-y_1^a\frac{z^by_2^b}{(z,y_2)^b}-y_2^a\frac{z^by_1^b}{(z,y_1)^b}\notag \\
=&\frac{z^b}{(z,x)^b}(x^ay_3^b+y_3^ax^b-y_1^ay_2^b-y_2^ay_1^b)\notag\\
=&\frac{z^b}{(z,x)^b}x^ay_3^a(x^{b-a}+y_3^{b-a}-y_1^{b-a}-y_2^{b-a})\notag\\
=&\frac{z^b}{(z,x)^b}x^ay_3^a(y_2^{b-a}-y_3^{b-a})
\Big(\Big(\frac{x}{y_2}\Big)^{b-a}-1\Big). \label{eq2.7}
\end{align}
It follows from (\ref{*}) and (\ref{eq2.6}) that
\begin{align}\label{eq2.8'}
\frac{[z,x]^b+[z,y_3]^b-[z,y_2]^b-[z,y_1]^b}{x^a+y_3^a-y_1^a-y_2^a}
={\Big(\frac{z}{(z,x)}\Big)}^b\cdot\frac{y_2^b-y_3^b}{y_2^a-y_3^a}\cdot
\frac{\big(\frac{x}{y_2}\big)^b-1}{\big(\frac{x}{y_2}\big)^a-1}.
\end{align}
And from (\ref{*}) and (\ref{eq2.7}), one derives that
\begin{align}\label{eq2.8'''}
&\frac{x^a[z,y_3]^b+y_3^a[z,x]^b-y_1^a[z,y_2]^b
-y_2^a[z,y_1]^b}{r^a(x^a+y_3^a-y_1^a-y_2^a)}\notag\\
=&{\Big(\frac{z}{(z,x)}\Big)}^by_3^a\cdot
\Big(\frac{x}{r}\Big)^a\frac{y_2^{b-a}-y_3^{b-a}}{y_2^a-y_3^a}
\cdot\frac{\big(\frac{x}{y_2}\big)^{b-a}-1}{\big(\frac{x}{y_2}\big)^a-1}.
\end{align}

Since $(z,x)|z$, $r|x$, $y_2|x$ and $a|b$, one knows that
all the rational numbers
$${\Big(\frac{z}{(z,x)}\Big)}^b,\ \frac{y_2^b-y_3^b}{y_2^a-y_3^a},\
\frac{\big(\frac{x}{y_2}\big)^b-1}{\big(\frac{x}{y_2}\big)^a-1},\ {\Big(\frac{x}{r}\Big)}^a,\
\frac{y_2^{b-a}-y_3^{b-a}}{y_2^a-y_3^a}\ {\rm and} \
\frac{\big(\frac{x}{y_2}\big)^{b-a}-1}{\big(\frac{x}{y_2}\big)^a-1}$$
are integers. It then follows from (\ref{eq2.8'})
and (\ref{eq2.8'''}) that
$$
\frac{[z,x]^b+[z,y_3]^b-[z,y_2]^b-[z,y_1]^b}
{x^a+y_3^a-y_1^a-y_2^a}\in {\bf Z},
$$
and
$$\frac{x^a[z,y_3]^b+y_3^a[z,x]^b-y_1^a[z,y_2]^b-y_2^a[z,y_1]^b}
{r^a(x^a+y_3^a-y_1^a-y_2^a)}\in {\bf Z}.
$$
In other words, $x^a+y_3^a-y_1^a-y_2^a$ divides
$[z,x]^b+[z,y_3]^b-[z,y_1]^b-[z,y_2]^b$,
and $x^a[z,y_3]^b+y_3^a[z,x]^b-y_1^a[z,y_2]^b-y_2^a[z,y_1]^b$
is divisible by $r^a(x^a+y_3^a-y_1^a-y_2^a)$.
So the second statement of part (i) and part (ii) are
true in this case. Lemma \ref{L:2.9} is proved in this case.

{\sc Case 2}. $d$ divides exactly one of $y_1$ and $y_2$.
WLOG, one may let $d|y_1$ and
$d\nmid y_2$. Since the set $\{u\in S: (x,z)|u|x\}$
satisfies the condition $\mathcal{G}$, $d|x,\ d\neq x,\ y_2\in G_S(x)$
and $d\nmid y_2$, applying Lemma \ref{L:2.6} gives us that
$[d,y_2]=x.$
Likewise, we have
$[d,y_3]=y_1.$
In fact, if $d=y_1$, then by $y_3|y_1$, we know that $y_3|d$
and so $[d,y_3]=d=y_1$. Now we let $d\ne y_1$.
Since $d|y_1, y_3\in G_S(y_1)$, and $d\nmid y_2$ implying
that $d\nmid y_3$, by Lemma \ref{L:2.6} we derive
that $[d,y_3]=y_1$. But $d=(x, z)|z$.
Then one can deduce that
\begin{align}
[z,x]=[z,[d,y_2]]=[[z,d],y_2]=[z,y_2]\ {\rm and}\ [z,y_1]=[z,[d,y_3]]
=[z,y_3].\label{eq2.11}
\end{align}
It readily follows from (\ref{eq2.11}) that
\begin{align}\label{eq2.11'}
[z,x]^b+[z,y_3]^b-[z,y_1]^b-[z,y_2]^b=0.
\end{align}

On the one hand, since $y_1|x$, we have $(z,y_1)|(z,x)$.
On the other hand, $(z,x)=d|y_1$ together with $d=(x,z)|z$
yields that $(z,x)|(z,y_1)$. Therefore
\begin{align}
(z,x)=(z,y_1).\label{eq2.12}
\end{align}
Since $xy_3=y_1y_2$, by (\ref{eq2.11}) and
(\ref{eq2.12}), we have
\begin{align}\label{eq2.13}
&(z,x)^b+(z,y_3)^b-(z,y_1)^b-(z,y_2)^b\notag\\
=&(z,y_3)^b-(z,y_2)^b\notag\\
=&\frac{z^by_3^b}{[z,y_3]^b}-\frac{z^by_2^b}{[z,y_2]^b}\notag\\
=&\frac{z^by_3^b}{[z,y_1]^b}-\frac{z^by_2^b}{[z,x]^b}\notag\\
=&(z,y_1)^b\frac{y_3^b}{y_1^b}-(z,x)^b\frac{y_2^b}{x^b}=0.
\end{align}
So by (\ref{eq2.11'}) and (\ref{eq2.13}), we know that
$x^a+y_3^a-y_1^a-y_2^a$ divides both of
$(z,x)^b+(z,y_3)^b-(z,y_1)^b-(z,y_2)^b$ and
$[z,x]^b+[z,y_3]^b-[z,y_1]^b-[z,y_2]^b$.
Part (i) holds in this case.

Likewise, by (\ref{eq2.11}) and (\ref{eq2.12}),
we can deduce that
\begin{align}
&x^a[z,y_3]^b+y_3^a[z,x]^b-y_1^a[z,y_2]^b-y_2^a[z,y_1]^b\notag\\
=&x^a[z,y_1]^b+y_3^a[z,x]^b-y_1^a[z,x]^b-y_2^a[z,y_1]^b\notag\\
=&x^a\frac{z^by_1^b}{(z,y_1)^b}+y_3^a\frac{z^bx^b}{(z,x)^b}
-y_1^a\frac{z^bx^b}{(z,x)^b}-y_2^a\frac{z^by_1^b}{(z,y_1)^b}\notag\\
=&\frac{z^b}{(z,x)^b}(x^ay_1^b+y_3^ax^b-y_1^ax^b-y_2^ay_1^b)\notag\\
=&\frac{z^b}{(z,x)^b}(x^ay_1^b+y_3^ax^b-y_1^ax^b-x^a y_3^a y_1^{b-a})\notag\\
=&\Big(\frac{z}{(z,x)}\Big)^bx^a(y_1^{b-a}-x^{b-a})(y_1^a-y_3^a)\notag\\
=&\Big(\frac{z}{(z,x)}\Big)^bx^ay_1^{b-a}
\Big(1-\Big(\frac{x}{y_1}\Big)^{b-a}\Big)(y_1^a-y_3^a).\label{eq2.14}
\end{align}
Since $xy_3=y_1y_2$, we have
\begin{align}
x^a+y_3^a-y_1^a-y_2^a=\Big(\Big(\frac{x}{y_1}\Big)^a
-1\Big)(y_1^a-y_3^a)\label{e2.15}.
\end{align}
So by (\ref{eq2.14}) and (\ref{e2.15}), one has
$$
\begin{aligned}
&\frac{x^a[z,y_3]^b+y_3^a[z,x]^b-y_1^a[z,y_2]^b-y_2^a[z,y_1]^b}
{r^a(x^a+y_3^a-y_1^a-y_2^a)}\\
=&-\Big(\frac{x}{r}\Big)^a{\Big(\frac{z}{(z,x)}\Big)}^by_1^{b-a}
\frac{\big(\frac{x}{y_1}\big)^{b-a}-1}{\big(\frac{x}{y_1}\big)^a-1}.
\end{aligned}
$$
But the condition $a|(b-a)$ implies that
$$\frac{\big(\frac{x}{y_1}\big)^{b-a}-1}{\big(\frac{x}{y_1}\big)^a-1}\in {\bf Z}.$$
So
$$
\frac{x^a[z,y_3]^b+y_3^a[z,x]^b-y_1^a[z,y_2]^b
-y_2^a[z,y_1]^b}{r^a(x^a+y_3^a-y_1^a-y_2^a)}\in {\bf Z}.
$$
That is, $r^a(x^a+y_3^a-y_1^a-y_2^a)$ divides
$x^a[z,y_3]^b+y_3^a[z,x]^b-y_1^a[z,y_2]^b-y_2^a[z,y_1]^b$
as desired. Thus part (ii) is proved in this case.

This concludes the proof of Lemma \ref{L:2.9}.
\end{proof}

\begin{lemma}\label{L:2.15}
Let $a$ and $b$ be positive integers with $a|b$.
Let $S$ be a gcd-closed set satisfying
$\max_{x\in S}\{|G_S(x)|\}=2$.
If $S$ satisfies the condition $\mathcal{G}$,
then all the elements of the $n$-th column
and the $n$-th row of the matrices $(S^b)(S^a)^{-1}$,
$[S^b](S^a)^{-1}$ and $[S^b][S^a]^{-1}$ are integers.
\end{lemma}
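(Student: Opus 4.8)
We may assume $n\ge2$, the case $n=1$ being trivial. The plan is to write both inverses in factored form and then reduce the whole statement to the divisibility lemmas~\ref{L:2.11} and~\ref{L:2.9}. Set $C:=(c_{ij})_{1\le i,j\le n}$ with $c_{ij}$ as in~(\ref{eq2.1}). Lemmas~\ref{L:2.1} and~\ref{L:2.8} give $(S^a)^{-1}=C\,{\rm diag}(\alpha_{a,1}^{-1},\dots,\alpha_{a,n}^{-1})\,C^{\top}$ and $[S^a]^{-1}=E\,C\,{\rm diag}(\beta_{a,1}^{-1},\dots,\beta_{a,n}^{-1})\,C^{\top}E$, where $E:={\rm diag}(x_1^{-a},\dots,x_n^{-a})$ and all scalars are nonzero since $(S^a)$ and $[S^a]$ are nonsingular. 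As $x_n$ is the largest element of $S$, Lemma~\ref{L:2.4} applied to each column shows that the $n$-th row of $C$ equals $(0,\dots,0,1)$; hence the $n$-th column of $(S^a)^{-1}$ is $\alpha_{a,n}^{-1}$ times the $n$-th column of $C$, and that of $[S^a]^{-1}$ is $x_n^{-a}\beta_{a,n}^{-1}$ times $E$ applied to the $n$-th column of $C$. Consequently $((S^b)(S^a)^{-1})_{in}=\alpha_{a,n}^{-1}\sum_m(x_i,x_m)^bc_{mn}$, $([S^b](S^a)^{-1})_{in}=\alpha_{a,n}^{-1}\sum_m[x_i,x_m]^bc_{mn}$ and $([S^b][S^a]^{-1})_{in}=(x_n^a\beta_{a,n})^{-1}\sum_m x_m^{-a}[x_i,x_m]^bc_{mn}$, and by Lemma~\ref{L:2.4} each inner sum reduces to a two-term difference such as $(x_i,x_n)^b-(x_i,x_{n_0})^b$ when $G_S(x_n)=\{x_{n_0}\}$, and to a four-term alternating sum when $G_S(x_n)=\{x_{n_1},x_{n_2}\}$ and $x_{n_3}=(x_{n_1},x_{n_2})$, while $\alpha_{a,n}$ and $\beta_{a,n}$ are the matching expressions of Lemma~\ref{L:2.3}. (These two cases are exhaustive since $x_1\mid x_n$ forces $G_S(x_n)\ne\emptyset$.)

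With the $n$-th columns reduced in this way, integrality of the entries of $(S^b)(S^a)^{-1}$ and $[S^b](S^a)^{-1}$ is precisely Lemma~\ref{L:2.11}(i) in the singleton case and Lemma~\ref{L:2.9}(i) in the two-divisor case. For $[S^b][S^a]^{-1}$ I would first clear the inner denominators: in the singleton case, multiply numerator and denominator by $x_n^a x_{n_0}^a$ and apply Lemma~\ref{L:2.11}(ii) with $r=x_n$; in the two-divisor case, multiply by $x_n^a x_{n_3}^a$, which also equals $x_{n_1}^a x_{n_2}^a$ because $x_n=[x_{n_1},x_{n_2}]$ by condition~$\mathcal{G}$ at $x_n$. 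This single step turns the numerator into the combination appearing in Lemma~\ref{L:2.9}(ii) and, via $\alpha_{a,n}=x_n^a x_{n_3}^a\beta_{a,n}$ (Lemma~\ref{L:2.3} together with $x_n^a x_{n_3}^a=x_{n_1}^a x_{n_2}^a$), turns the denominator into $x_n^a\alpha_{a,n}$, so integrality follows from Lemma~\ref{L:2.9}(ii) with $r=x_n$. Getting the $[S^b][S^a]^{-1}$ entries into the exact shapes of Lemmas~\ref{L:2.11}(ii) and~\ref{L:2.9}(ii) --- which is where condition~$\mathcal{G}$ is used, through $x_n x_{n_3}=x_{n_1}x_{n_2}$ --- is the delicate bookkeeping point.

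To invoke Lemma~\ref{L:2.9} one needs the interval set $\{u\in S:(x,z)\mid u\mid x\}$ to satisfy condition~$\mathcal{G}$, which I would deduce from the hypothesis that $S$ satisfies~$\mathcal{G}$. The key observation is that for $w,x\in S$ with $w\mid x$ and $A:=\{u\in S:w\mid u\mid x\}$, one has $G_A(u)=G_S(u)\cap A$ for every $u\in A$: the inclusion $\supseteq$ is immediate, while if some $v\in G_A(u)$ were not in $G_S(u)$ there would be $w'\in S$ with $v\mid w'\mid u$ and $w'\notin\{v,u\}$, but then $w\mid v\mid w'$ and $w'\mid u\mid x$ force $w'\in A$, contradicting $v\in G_A(u)$. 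Hence $|G_A(u)|\le|G_S(u)|\le2$, and if $|G_A(u)|=2$ then $G_A(u)=G_S(u)=\{v_1,v_2\}$; condition~$\mathcal{G}$ at $u$ in $S$ gives $[v_1,v_2]=u$ and $(v_1,v_2)\in G_S(v_1)\cap G_S(v_2)$, and since $v_1,v_2\in A$ force $w\mid(v_1,v_2)\mid v_1\mid u\mid x$, i.e.\ $(v_1,v_2)\in A$, we get $(v_1,v_2)\in G_A(v_1)\cap G_A(v_2)$. So $A$ satisfies~$\mathcal{G}$ and Lemma~\ref{L:2.9} applies, with $x=x_n$ and $z=x_i$ for the $n$-th columns above.

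For the $n$-th rows I would read the same factorizations in the other direction: $((S^b)(S^a)^{-1})_{nj}=\sum_k\frac{c_{jk}}{\alpha_{a,k}}\sum_m(x_n,x_m)^bc_{mk}$, the analogue for $[S^b](S^a)^{-1}$ with $[x_n,x_m]^b$ in place of $(x_n,x_m)^b$, and $([S^b][S^a]^{-1})_{nj}=x_j^{-a}\sum_k c_{jk}\frac{1}{\beta_{a,k}}\sum_m x_m^{-a}[x_n,x_m]^bc_{mk}$. For each fixed $k$ the inner sum over $m$ has exactly the shape analysed above, with $x_k$ now in the role of the largest element and $x_n$ in the role of $z$ (the interval set $\{u\in S:(x_k,x_n)\mid u\mid x_k\}$ being handled by the previous paragraph), so it is divisible by $\alpha_{a,k}$ in the first two cases and the remaining quotient equals $x_k^a$ times an integer in the third; the boundary index $k=1$, where $G_S(x_1)=\emptyset$, $\alpha_{a,1}=x_1^a$ and $\beta_{a,1}=x_1^{-a}$, reduces to $x_1^a$ dividing $x_1^b$ or $x_n^b$, which holds since $x_1\mid x_n$ and $a\mid b$. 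In the $[S^b][S^a]^{-1}$ row one then absorbs the leftover $x_j^{-a}$ against the fact that $c_{jk}\neq0$ forces $x_j\mid x_k$ (clear from~(\ref{eq2.1})); then every summand is an integer, so each $(n,j)$-entry is an integer, which completes the proof. I expect the main obstacle to be precisely these matchings to Lemmas~\ref{L:2.11} and~\ref{L:2.9} --- above all the denominator-clearing for $[S^b][S^a]^{-1}$ --- together with verifying, via the identity $G_A(u)=G_S(u)\cap A$, that the interval sets inherit condition~$\mathcal{G}$.
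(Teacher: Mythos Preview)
Your proposal is correct and follows essentially the same route as the paper: both compute the $n$-th column and $n$-th row entries via Lemmas~\ref{L:2.1} and~\ref{L:2.8}, reduce the inner sums using Lemmas~\ref{L:2.3} and~\ref{L:2.4}, and then appeal to Lemmas~\ref{L:2.11} and~\ref{L:2.9} according to whether $|G_S(x_k)|$ is $1$ or $2$; your denominator-clearing for $[S^b][S^a]^{-1}$ via $x_kx_{k_3}=x_{k_1}x_{k_2}$ is exactly what the paper does. One point where you are actually more careful than the paper: you explicitly verify, via the identity $G_A(u)=G_S(u)\cap A$, that the interval set $\{u\in S:(x,z)\mid u\mid x\}$ inherits condition~$\mathcal{G}$, which the paper uses without justification when invoking Lemma~\ref{L:2.9}. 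A minor remark on your row argument: the phrase ``the remaining quotient equals $x_k^a$ times an integer'' works (Lemma~\ref{L:2.9}(ii) with $r=x_k$, then $x_j\mid x_k$), but the paper gets there in one step by taking $r=x_j$ directly in Lemma~\ref{L:2.9}(ii), which is slightly cleaner.
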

\begin{proof} We divide the proof into the following two cases:

{\sc Case 1.} $1\le i\le n$ and $j=n$. By Lemmas
\ref{L:2.1} and \ref{L:2.8}, we have
\begin{align}
{\left( (S^b)(S^a)^{-1}\right)}_{in}=&\sum_{m=1}^n(x_i, x_m)^b
\sum_{x_m|x_k\atop x_n|x_k}\frac {c_{mk}c_{nk}}{\alpha _{a,k}}\notag\\
=&\frac{1}{\alpha _{a,n}}\sum_{m=1}^n(x_i,x_m)^bc_{mn},\label{eq2.17}
\end{align}
\begin{align}
{\left( [S^b](S^a)^{-1}\right)}_{in}
=&\sum_{m=1}^n[x_i, x_m]^b\sum_{x_m|x_k\atop x_n|x_k}
\frac {c_{mk}c_{nk}}{\alpha _{a,k}}\notag\\
=&\frac{1}{\alpha _{a,n}}\sum_{m=1}^n [x_i,x_m]^bc_{mn}\label{eq2.18}
\end{align}
and
\begin{align}
{\left([S^b][S^a]^{-1}\right)}_{in}
=&\sum_{m=1}^n[x_i, x_m]^b\frac{1}{x_m^ax_n^a}\sum_{x_m|x_k\atop x_n|x_k}
\frac{c_{mk}c_{nk}}{\beta _{a,k}}\notag\\
=&\frac{1}{x_n^a\beta_{a,n}}\sum_{m=1}^n\frac{[x_i,x_m]^bc_{mn}}{x_m^a}.
\label{eq2.19}
\end{align}

If $|G_S(x_n)|=1$,  we may let $G_S(x_n)=\{x_{n_1}\}$.
Then by (\ref{eq2.17}) to (\ref{eq2.19}), Lemmas
\ref{L:2.3}, \ref{L:2.4} and \ref{L:2.11}, one deduces that
$${\left( (S^b)(S^a)^{-1}\right)}_{in}
={\frac{(x_i,x_n)^b-(x_i,x_{n_1})^b}{x_n^a-x_{n_1}^a}}\in {\bf Z},$$
$$
{\left([S^b])(S^a)^{-1}\right)}_{in}
={\frac{[x_i,x_n]^b-[x_i,x_{n_1}]^b}{x_n^a-x_{n_1}^a}}\in {\bf Z}
$$
and
$$
{\left([S^b])[S^a]^{-1}\right)}_{in}
=\dfrac{x_{n_1}^a[x_i,x_n]^b-x_n^a[x_i,x_{n_1}]^b}{x_n^a(x_{n_1}^a-x_n^a)}\in {\bf Z}
$$
as required.

If $|G_S(x_n)|=2$, we may let $G_S(x_n)=\{x_{n_1},x_{n_2} \}$ and $x_{n_3}=(x_{n_1},x_{n_2})$. Then $x_{n} x_{n_3}=x_{n_1}x_{n_2}$.
Since $S$ satisfies the condition $\mathcal{G}$, by
(\ref{eq2.17}) to (\ref{eq2.19}), Lemmas \ref{L:2.3},
\ref{L:2.4} and \ref{L:2.9}, one derives that
\begin{align*}
{\left((S^b))(S^a)^{-1}\right)}_{in}
=\frac{(x_i,x_n)^b-(x_i,x_{n_1})^b-(x_i,x_{n_2})^b
+(x_i,x_{n_3})^b}{x_n^a-x_{n_1}^a-x_{n_2}^a+x_{n_3}^a}\in {\bf Z},
\end{align*}

\begin{align*}
{\left([S^b])(S^a)^{-1}\right)}_{in}
=\frac{[x_i,x_n]^b-[x_i, x_{n_1}]^b-[x_i, x_{n_2}]^b+[x_i,
x_{n_3}]^b}{x_n^a-x_{n_1}^a-x_{n_2}^a+x_{n_3}^a}\in {\bf Z}
\end{align*}
and
\begin{align*}
{\left([S^b])[S^a]^{-1}\right)}_{in}
&=\frac{\frac{[x_n,x_i]^b}{x_n^a}-\frac{[x_{n_1},x_i]^b}
{x_{n_1}^a}-\frac{[x_{n_2},x_i]^b}{x_{n_2}^a}+
\frac{[x_{n_3},x_i]^b}{x_{n_3}^a}}{x_n^a\big(\frac{1}
{x_n^a}-\frac{1}{x_{n_1}^a}-\frac{1}{x_{n_2}^a}+\frac{1}{x_{n_3}^a}\big)}\\
&=\frac{x_n^a[x_i,x_{n_3}]^b+x_{n_3}^a[x_i,x_n]^b-x_{n_2}^a
[x_i,x_{n_1}]^b-x_{n_1}^a[x_i,x_{n_2}]^b}
{x_n^a({x_n^a+x_{n_3}^a-x_{n_1}^a-x_{n_2}^a})}\in {\bf Z}
\end{align*}
as required. So Lemma \ref{L:2.15} is proved in Case 1.

{\sc Case 2.} $i=n$ and $1\le j\le n-1$. By Lemmas
\ref{L:2.1} and \ref{L:2.8}, one has
\begin{align*}
{\left( (S^b)(S^a)^{-1}\right)}_{nj}
=&\sum_{m=1}^n(x_n, x_m)^b\sum_{x_m|x_k\atop x_j|x_k}
\frac {c_{mk}c_{jk}}{\alpha _{a,k}}\\
=&\sum_{x_j|x_k}\frac{c_{jk}}{\alpha _{a,k}}
\sum_{x_m|x_k}c_{mk}(x_m,x_n)^b\\
:=&\sum_{x_j|x_k}c_{jk}\omega_k,
\end{align*}

\begin{align*}
{\left( [S^b](S^a)^{-1}\right)}_{nj}
=&\sum_{m=1}^n[x_n, x_m]^b\sum_{x_m|x_k\atop x_j|x_k}\frac {c_{mk}c_{jk}}{\alpha _{a,k}}\\
=&\sum_{x_j|x_k}\frac{c_{jk}}{\alpha _{a,k}}\sum_{x_m|x_k}c_{mk}[x_m,x_n]^b\\
:=&\sum_{x_j|x_k}c_{jk}\gamma_k
\end{align*}
and
\begin{align*}
{\left( [S^b][S^a]^{-1}\right)}_{nj}
=&\sum_{m=1}^n[x_n, x_m]^b\frac{1}{x_m^ax_j^a}\sum_{x_m|x_k\atop x_j|x_k}\frac {c_{mk}c_{jk}}{\beta _{a,k}}\\
=&\sum_{x_j|x_k}\frac{c_{jk}}{x_j^a\beta _{a,k}}\sum_{x_m|x_k}\frac{1}{x_m^a}c_{mk}[x_m,x_n]^b\\
:=&\sum_{x_j|x_k}c_{jk}\eta_k.
\end{align*}
Claim that for any positive integer $k$ with $x_j|x_k$, one has
$\omega_k\in {\bf Z}$, $\gamma_k\in {\bf Z}$ and $\eta_k \in {\bf Z}$.

If $k=1$, then we must have $m=j=1$. In this case, one has
$$\omega_1=\frac{1}{\alpha _{a,1}}\cdot c_{11}\cdot(x_1,x_n)^b
=\frac{(x_1,x_n)^b}{x_1^a}=x_1^{b-a}\in {\bf Z},$$
$$\gamma_1=\frac{1}{\alpha _{a,1}}\cdot c_{11}\cdot[x_1,x_n]^b
=\frac{[x_1,x_n]^b}{x_1^a}=\frac{x_1^{b-a}x_n^b}{{(x_1,x_n)}^b}\in {\bf Z}$$
and
$$\eta_1=\frac{1}{\beta _{a,1}}\cdot\frac{1}
{x_1^{2a}}\cdot c_{11}\cdot[x_1,x_n]^b
=\frac{[x_1,x_n]^b}{x_1^a}\in {\bf Z}$$
since $\alpha_{a,1}=x_1^a$ and $\beta_{a,1}=x_1^{-a}$.
So the claim is true when $k=1$.

Now let $k>1$. If $|G_S(x_k)|=1$, one can set
$G_S(x_k)=\{x_{k_1}\}$ with $1\le k_1<k$.
By Lemmas \ref{L:2.3}, \ref{L:2.4} and \ref{L:2.11}, we have
$$
\omega_k=\frac{1}{\alpha _{a,k}}\sum_{x_m|x_k}c_{mk}(x_m,x_n)^b
={\frac{(x_k,x_n)^b-(x_{k_1},x_n)^b}{x_k^a-x_{k_1}^a}}\in {\bf Z},
$$

$$
\gamma_k=\frac{1}{\alpha _{a,k}}\sum_{x_m|x_k}c_{mk}[x_m,x_n]^b
={\frac{[x_k,x_n]^b-[x_{k_1},x_n]^b}{x_k^a-x_{k_1}^a}}\in {\bf Z}
$$
and
$$
\eta_k={\frac{1}{x_j^a\beta _{a,k}}\sum_{x_m|x_k}
\dfrac{1}{x_m^a}c_{mk}[x_m,x_n]^b}
={\frac{x^a_{k_1}[x_k,x_n]^b-x_k^a[x_{k_1},x_n]^b}
{x_j^a(x_{k_1}^a-x_k^a)}}\in {\bf Z}
$$
as claimed.
So we need only to treat the remaining case
$|G_S(x_k)|=2$. Now let $G_S(x_k)=\{x_{k_1},x_{k_2} \}$
and $x_{k_3}:=(x_{k_1},x_{k_2})$. Then by Lemmas
\ref{L:2.3}, \ref{L:2.4} and \ref{L:2.9}, we have
$$
\begin{aligned}
\omega_k
={\frac{(x_k,x_n)^b-(x_{k_1},x_n)^b-(x_{k_2},x_n)^b
+(x_{k_3},x_n)^b}{x_k^a-x_{k_1}^a-x_{k_2}^a+x_{k_3}^a}}\in {\bf Z},
\end{aligned}
$$

$$
\begin{aligned}
\gamma_k={\frac{[x_k,x_n]^b-[x_{k_1},x_n]^b-[x_{k_2},x_n]^b
+[x_{k_3},x_n]^b}{x_k^a-x_{k_1}^a-x_{k_2}^a+x_{k_3}^a}}\in {\bf Z}
\end{aligned}
$$
and
\begin{align*}
\eta_k
=&\frac{\frac{[x_k,x_n]^b}{x_k^a}-\frac{[x_{k_1},x_n]^b}
{x_{k_1}^a}-\frac{[x_{k_2},x_n]^b}{x_{k_2}^a}+
\frac{[x_{k_3},x_n]^b}{x_{k_3}^a}}{x_j^a\big(\frac{1}{x_k^a}
-\frac{1}{x_{k_1}^a}-\frac{1}{x_{k_2}^a}+\frac{1}{x_{k_3}^a}\big)}\\
=&\frac{x_k^a[x_n,x_{k_3}]^b+x_{k_3}^a[x_n,x_k]^b-x_{k_2}^a
[x_n,x_{k_1}]^b-x_{k_1}^a[x_n,x_{k_2}]^b}
{x_j^a({x_k^a+x_{k_3}^a-x_{k_1}^a-x_{k_2}^a})}\in {\bf Z}
\end{align*}
as desired.

This completes the proof of Case 2 and that of Lemma \ref{L:2.15}.
\end{proof}

\begin{lemma} {\rm\cite[Theorem 1.3]{Z-IJNT22}}
{\rm\cite[Theorem 1.1]{ZL-BAMS22}} \label{L:2.13}
Let $a$ and $b$ be positive integers with $a|b$
and let $S$ be a gcd-closed set satisfying $\max_{x\in S}
\{|G_S(x)|\}=1$. Then in the ring $M_{|S|}({\bf Z})$,
we have $(S^a)|(S^b), (S^a)|[S^b]$ and $[S^a]|[S^b]$.
\end{lemma}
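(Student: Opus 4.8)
Although Lemma \ref{L:2.13} is quoted from \cite{Z-IJNT22} and \cite{ZL-BAMS22}, here is a plan for a self-contained proof built only from the tools assembled above; it is the $\max_{x\in S}\{|G_S(x)|\}=1$ special case of the scheme that proves Theorem \ref{t1.2}. The plan is to show, by induction on $n=|S|$, that each of the rational matrices $(S^b)(S^a)^{-1}$, $[S^b](S^a)^{-1}$ and $[S^b][S^a]^{-1}$ has all entries in ${\bf Z}$, which is equivalent to the three asserted divisibilities. The base case $n=1$ is immediate since $a\mid b$. For $n\ge2$ I would delete the largest element, putting $S'=\{x_1,\dots,x_{n-1}\}$: removing $\max S$ keeps the set gcd closed, and since every greatest-type divisor of an $x\in S'$ is strictly below $x_n$, one has $G_{S'}(x)=G_S(x)$ for all $x\in S'$, so $\max_{x\in S'}\{|G_{S'}(x)|\}\le1$ and the inductive hypothesis (or the base case when $|S'|=1$) yields $(S'^b)(S'^a)^{-1},[S'^b](S'^a)^{-1},[S'^b][S'^a]^{-1}\in M_{n-1}({\bf Z})$.

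The second step is to check that the $n$th row and the $n$th column of all three matrices are integral. Since $n\ge2$, write $G_S(x_n)=\{x_{n_1}\}$; feeding the single greatest-type divisor cases of Lemmas \ref{L:2.3} and \ref{L:2.4} into the inverse formulas of Lemmas \ref{L:2.1} and \ref{L:2.8}, the $(i,n)$ entries are
$$\frac{(x_i,x_n)^b-(x_i,x_{n_1})^b}{x_n^a-x_{n_1}^a},\qquad
\frac{[x_i,x_n]^b-[x_i,x_{n_1}]^b}{x_n^a-x_{n_1}^a},\qquad
\frac{x_{n_1}^a[x_i,x_n]^b-x_n^a[x_i,x_{n_1}]^b}{x_n^a(x_{n_1}^a-x_n^a)},$$
each an integer by Lemma \ref{L:2.11}(i) and (ii). For the $n$th row one expands $\left((S^b)(S^a)^{-1}\right)_{nj}=\sum_{x_j\mid x_k}c_{jk}\omega_k$ with $\omega_k:=\alpha_{a,k}^{-1}\sum_{x_m\mid x_k}c_{mk}(x_m,x_n)^b$ and reduces each $\omega_k$ to a single quotient: $\omega_1=x_1^{b-a}$, and for $k\ge2$, $\omega_k=\frac{(x_k,x_n)^b-(x_{k_1},x_n)^b}{x_k^a-x_{k_1}^a}\in{\bf Z}$ by Lemma \ref{L:2.11}(i); the LCM analogues replace $(\,\cdot\,,x_n)$ by $[\,\cdot\,,x_n]$ and use Lemma \ref{L:2.11}(ii). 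These are exactly the single greatest-type divisor parts of the computations carried out inside the proof of Lemma \ref{L:2.15}.

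The third step glues the two ingredients together by a rank-one update of the inverse. Isolating the $k=n$ term in Lemmas \ref{L:2.1} and \ref{L:2.8} gives
\begin{align*}
(S^a)^{-1}=\begin{pmatrix}(S'^a)^{-1}&\mathbf 0\\ \mathbf 0^{\top}&0\end{pmatrix}+\frac{1}{\alpha_{a,n}}\,\mathbf c\,\mathbf c^{\top}
\qquad\text{and}\qquad
[S^a]^{-1}=\begin{pmatrix}[S'^a]^{-1}&\mathbf 0\\ \mathbf 0^{\top}&0\end{pmatrix}+\frac{1}{\beta_{a,n}}\,\mathbf w\,\mathbf w^{\top},
\end{align*}
where $\mathbf c=(c_{1n},\dots,c_{nn})^{\top}$ and $\mathbf w=(c_{1n}/x_1^a,\dots,c_{nn}/x_n^a)^{\top}$. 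Left-multiplying by $(S^b)$: the block term has top-left corner $(S'^b)(S'^a)^{-1}\in M_{n-1}({\bf Z})$ by the inductive hypothesis; $\alpha_{a,n}^{-1}(S^b)\mathbf c$ is the (already integral) $n$th column of $(S^b)(S^a)^{-1}$ and $\mathbf c\in{\bf Z}^n$, so the rank-one matrix is integral, whence the $n$th row of the block term is integral too by the second step; thus $(S^b)(S^a)^{-1}\in M_n({\bf Z})$, and $[S^b](S^a)^{-1}\in M_n({\bf Z})$ by the identical argument. For $[S^b][S^a]^{-1}$ the one wrinkle is that $\mathbf w$ is not integral, but $\beta_{a,n}^{-1}[S^b]\mathbf w=x_n^a\cdot\big([S^b][S^a]^{-1}\big)_{\cdot,n}$ is $x_n^a$ times an integer vector, and $x_n^a w_j=c_{jn}(x_n/x_j)^a\in{\bf Z}$ because $c_{jn}\neq0$ forces $x_j\mid x_n$; so the rank-one part is again integral and the argument closes. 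I expect the main obstacle to be the second step: massaging the last-row sums $\sum_{x_j\mid x_k}c_{jk}\omega_k$ until each $\omega_k$ is in the single-quotient form on which Lemma \ref{L:2.11} bites — once that is done, the rank-one decomposition finishes everything with only routine bookkeeping.
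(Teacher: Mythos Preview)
The paper does not prove Lemma~\ref{L:2.13}; it simply imports it from \cite{Z-IJNT22} and \cite{ZL-BAMS22}. Your self-contained argument is correct, and as you say it is exactly the $\max_{x\in S}|G_S(x)|\le 1$ specialisation of the scheme the paper uses for Theorem~\ref{t1.2}: your step~2 is the single-greatest-type-divisor branch of the proof of Lemma~\ref{L:2.15}, and your rank-one identity
\[
(S^a)^{-1}=\begin{pmatrix}(S'^a)^{-1}&\mathbf 0\\ \mathbf 0^{\top}&0\end{pmatrix}+\frac{1}{\alpha_{a,n}}\,\mathbf c\,\mathbf c^{\top}
\]
is precisely the content of the computations of $\mathcal D_{ij},\mathcal E_{ij},\mathcal F_{ij}$ in the proof of Lemma~\ref{L:2.16}, repackaged in matrix form. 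So while there is no ``paper's proof'' of this lemma to compare against, your plan matches the paper's overall strategy and goes through without difficulty.

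One small technical point to patch: you invoke Lemma~\ref{L:2.8} for the inverse of $[S^a]$, but that lemma (and Lemma~\ref{L:2.7} on which it rests) is stated under the hypothesis $\max_{x\in S}|G_S(x)|=2$. In your setting $\max_{x\in S}|G_S(x)|=1$, so you should note that the conclusion still holds: by Lemma~\ref{L:2.3} each $\beta_{a,k}$ equals $x_1^{-a}$ or $x_k^{-a}-x_{k_0}^{-a}$ and is therefore nonzero, whence $(\frac{1}{\xi_a}(x_i,x_j))$ is nonsingular and Lemma~\ref{L:2.20} with $f=1/\xi_a$ gives the same inverse formula. With that remark your argument is complete.
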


Finally, we can use Lemma \ref{L:2.15} to show the
following main result of this section.

\begin{lemma}\label{L:2.16}
Let $S$ be a gcd-closed set satisfying $\max_{x\in S}
\{|G_S(x)|\}=2$ and let $a$ and $b$ be positive
integers such that $a|b$. Let $S_0:=S\setminus\{\max(S)\}$.
If $S$ satisfies the condition $\mathcal{G}$, then
$$(S^b)(S^a)^{-1}\in M_n({\bf Z})
\Leftrightarrow(S_0^b)(S_0^a)^{-1}\in M_{n-1}({\bf Z}),$$

$$[S^b](S^a)^{-1}\in M_n({\bf Z})\Leftrightarrow[S_0^b]
(S_0^a)^{-1}\in M_{n-1}({\bf Z})$$
and
$$
[S^b][S^a]^{-1}\in M_n({\bf Z})\Leftrightarrow
[S_0^b][S_0^a]^{-1}\in M_{n-1}({\bf Z}).
$$
\end{lemma}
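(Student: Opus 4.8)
The plan is to reduce the $n\times n$ problem to the $(n-1)\times(n-1)$ problem by exploiting the block structure of the matrices relative to the last index $n$, which corresponds to the unique (or one of the) largest elements $x_n=\max(S)$. First I would record the elementary but crucial observation that deleting $x_n$ from $S$ changes neither the gcd-closedness, nor the hypothesis $\max_{x\in S}\{|G_S(x)|\}=2$, nor the condition $\mathcal{G}$ for the remaining elements: since $x_n$ is maximal, for every $x\in S_0$ one has $G_{S_0}(x)=G_S(x)$, and the greatest-type divisors and their gcd's referenced in $\mathcal{G}$ all lie in $S_0$. Hence all of Lemmas \ref{L:2.1}, \ref{L:2.8}, \ref{L:2.15} apply verbatim to $S_0$, and in particular $(S_0^a)$ and $[S_0^a]$ are nonsingular with the stated inverse formulas.

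Next I would set up the comparison of inverses. Write $(S^a)$ in block form with the first $n-1$ rows/columns indexed by $S_0$ and the last by $x_n$. Because $S$ is gcd closed and $x_n$ is maximal, the Bourque--Ligh inverse formula of Lemma \ref{L:2.1} has a convenient feature: in $((S^a)^{-1})_{ij}=\sum_{x_i|x_k,\,x_j|x_k}c_{ik}c_{jk}/\alpha_{a,k}$, the only $k$ with $x_n|x_k$ is $k=n$ itself, so the ``new'' term (the one involving $\alpha_{a,n}$) appears only when $i=n$ or $j=n$. Consequently, for $1\le i,j\le n-1$ the quantity $((S^a)^{-1})_{ij}-((S_0^a)^{-1})_{ij}$ equals the single term $c_{in}c_{jn}/\alpha_{a,n}$, i.e. the upper-left $(n-1)\times(n-1)$ block of $(S^a)^{-1}$ differs from $(S_0^a)^{-1}$ by a rank-one correction $\frac{1}{\alpha_{a,n}}c\,c^{\mathsf T}$ where $c=(c_{1n},\dots,c_{n-1,n})^{\mathsf T}$. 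The same identity holds for $[S^a]^{-1}$ versus $[S_0^a]^{-1}$ via Lemma \ref{L:2.8}, with $\beta_{a,n}$ and an extra scaling by $1/(x_i^ax_j^a)$. This is the structural heart of the argument.

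Now I would compute the upper-left block of the product $(S^b)(S^a)^{-1}$ (and analogously $[S^b](S^a)^{-1}$, $[S^b][S^a]^{-1}$) and compare it with $(S_0^b)(S_0^a)^{-1}$. Partitioning $(S^b)$ the same way, the $(i,j)$-entry of $(S^b)(S^a)^{-1}$ for $i,j\le n-1$ splits as $\sum_{m\le n-1}(x_i,x_m)^b((S^a)^{-1})_{mj}+(x_i,x_n)^b((S^a)^{-1})_{nj}$; substituting the rank-one decomposition from the previous step, this becomes $((S_0^b)(S_0^a)^{-1})_{ij}$ plus a correction term that is a $\Real$-multiple of $\sum_m (x_i,x_m)^b c_{mn}$ times $c_{jn}/\alpha_{a,n}$, and one recognizes the first factor (up to sign) as essentially the quantity $\alpha_{a,n}\cdot((S^b)(S^a)^{-1})_{in}$ from equation (\ref{eq2.17}) — so the whole correction is $c_{jn}\cdot((S^b)(S^a)^{-1})_{in}$. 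Thus
\begin{align*}
\big((S^b)(S^a)^{-1}\big)_{ij}=\big((S_0^b)(S_0^a)^{-1}\big)_{ij}+c_{jn}\cdot\big((S^b)(S^a)^{-1}\big)_{in}.
\end{align*}
By Lemma \ref{L:2.15} the term $((S^b)(S^a)^{-1})_{in}$ is an integer, and $c_{jn}\in\mathbf Z$ by Lemma \ref{L:2.4}. Hence the upper-left block of $(S^b)(S^a)^{-1}$ is integral if and only if $(S_0^b)(S_0^a)^{-1}$ is integral; combined with the fact (Lemma \ref{L:2.15}) that the full last row and last column of $(S^b)(S^a)^{-1}$ are always integral, this gives the claimed equivalence. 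The arguments for $[S^b](S^a)^{-1}$ and $[S^b][S^a]^{-1}$ are identical in shape, using (\ref{eq2.18}), (\ref{eq2.19}) and the corresponding rank-one corrections, with the diagonal scalings $x_i^{-a}$ (resp. $x_i^{-a}$, and $x_n^{-a}\beta_{a,n}$) tracked along.

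The step I expect to be the main obstacle is the bookkeeping in the middle: verifying cleanly that the rank-one correction to the inverse, when multiplied into $(S^b)$ (or $[S^b]$), reorganizes \emph{exactly} into $c_{jn}$ times the last-column entry already shown to be an integer, rather than into some messier expression. This requires being careful that there are no stray terms with $k=n$ hiding in the sums $\sum_{x_m|x_k,\,x_j|x_k}$ for $j\le n-1$ — which is true precisely because $x_n$ is maximal so $x_n\nmid x_k$ for $k\le n-1$ — and that the scalar factors $1/\alpha_{a,n}$ or $1/(x_n^a\beta_{a,n})$ cancel correctly against the definitions in (\ref{eq2.17})--(\ref{eq2.19}). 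Once that identity is in hand, the equivalence is immediate, and the lemma sets up the induction on $|S|$ that will drive the proof of Theorem \ref{t1.2}.
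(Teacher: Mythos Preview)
Your approach is correct and essentially matches the paper's: both compute the difference $((S^b)(S^a)^{-1})_{ij}-((S_0^b)(S_0^a)^{-1})_{ij}$ (and the two LCM analogues) via the Bourque--Ligh inverse formula, isolating the single $k=n$ term, and then show this difference is an integer. Your packaging is slightly more economical---you recognise the correction as $c_{jn}$ times the $(i,n)$-entry already shown integral in Lemma \ref{L:2.15}, whereas the paper re-runs the case analysis on $|G_S(x_n)|$ invoking Lemmas \ref{L:2.3}, \ref{L:2.4}, \ref{L:2.9} and \ref{L:2.11} once more; just note that in the $[S^b][S^a]^{-1}$ case the correction is actually $c_{jn}\cdot (x_n^a/x_j^a)\cdot([S^b][S^a]^{-1})_{in}$, and the extra factor $x_n^a/x_j^a$ is an integer whenever $c_{jn}\ne 0$ since then $x_j\mid x_n$ by Lemma \ref{L:2.4}.
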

\begin{proof} It is clear that
$S_0:=S\setminus \{x_n\}=\{x_1, ..., x_{n-1}\}$.
At first, by Lemma \ref{L:2.15}, one knows
that all the elements of the $n$-th column and the
$n$-th row of the matrices $(S^b)(S^a)^{-1}$,
$[S^b](S^a)^{-1}$ and $[S^b][S^a]^{-1}$ are integers.
So it is sufficient to show that
$\forall\ i,j$ ($1\le i,j\le n-1$), one has
\begin{align}\label{eq2.20}
\mathcal{D}_{ij}:={\left((S^b)(S^a)^{-1}\right)}_{ij}
-{\left((S_0^b)(S_0^a)^{-1}\right)}_{ij}\in {\bf Z},
\end{align}
\begin{align}\label{eq2.21}
\mathcal{E}_{ij}:={\left([S^b](S^a)^{-1}\right)}_{ij}
-{\left([S_0^b](S_0^a)^{-1}\right)}_{ij}\in {\bf Z}
\end{align}
and
\begin{align}\label{eq2.22}
\mathcal{F}_{ij}:={\left([S^b][S^a]^{-1}\right)}_{ij}
-{\left([S_0^b][S_0^a]^{-1}\right)}_{ij}\in {\bf Z}.
\end{align}

For this, we define the following function:
$$
e_{\mathcal u\mathcal v}:=\left\{
\begin{aligned}
1&\quad {\rm if} \ x_{\mathcal v}|x_{\mathcal u}\\
0&\quad {\rm if} \ x_{\mathcal v}\nmid x_{\mathcal u}
\end{aligned}
\right.
$$
for any positive integers $\mathcal u,\mathcal v$
($\mathcal u,\mathcal v\le n$).
Then for any positive integer $m$ ($\le n-1$), we have $e_{nm}=1$ if $x_m|x_n$, and
$e_{nm}=0$ otherwise. We can deduce that
\begin{align}
\mathcal{D}_{ij}
=&\sum_{m=1}^n(x_i,x_m)^b\sum_{x_m|x_k\atop x_j|x_k}
\frac{c_{mk}c_{jk}}{\alpha_{a,k}}
-\sum_{m=1}^{n-1}(x_i,x_m)^b\sum _{x_m|x_k\atop x_j|x_k,
x_k\neq x_n}\frac{c_{mk}c_{jk}}{\alpha _{a,k}}\notag\\
=&\frac{c_{nn}c_{jn}}{\alpha _{a,n}}(x_i,x_n)^b e_{nj}+
\sum_{m=1}^{n-1}\frac{c_{mn}c_{jn}}{\alpha_{a,n}}
(x_i,x_m)^b e_{nj}e_{nm} \notag\\
=&e_{nj} \frac{c_{jn}}{\alpha _{a,n}}\Big((x_i,x_n)^b
+\sum_{m=1}^{n-1}(x_i,x_m)^b c_{mn}e_{nm}\Big)\notag\\
:=& e_{nj} D_{ij}. \label{eq2.23}
\end{align}

Likewise, we have
\begin{align}
\mathcal{E}_{ij}
=&\sum_{m=1}^n[x_i,x_m]^b\sum_{x_m|x_k\atop x_j|x_k}
\frac{c_{mk}c_{jk}}{\alpha _{a,k}}
-\sum_{m=1}^{n-1}[x_i,x_m]^b\sum _{x_m|x_k\atop x_j|x_k,\
x_k\neq x_n}\frac{c_{mk}c_{jk}}{\alpha _{a,k}}\notag\\
=&\frac{c_{nn}c_{jn}}{\alpha _{a,n}}[x_i,x_n]^b e_{nj}+
\sum_{m=1}^{n-1}\frac{c_{mn}c_{jn}}
{\alpha _{a,n}}[x_i,x_m]^b e_{nj}e_{nm} \notag\\
=&e_{nj}\frac{c_{jn}}{\alpha _{a,n}}\Big([x_i,x_n]^b +
\sum_{m=1}^{n-1}[x_i,x_m]^b c_{mn}e_{nm}\Big)\notag\\
:=& e_{nj} E_{ij}. \label{eq2.24}
\end{align}
and
\begin{align}
\mathcal{F}_{ij}
=&\sum_{m=1}^n[x_i,x_m]^b\sum_{x_m|x_k\atop x_j|x_k}
\frac{c_{mk}c_{jk}}{x_m^ax_j^a\beta _{a,k}}
-\sum_{m=1}^{n-1}[x_i,x_m]^b\sum _{x_m|x_k\atop x_j|x_k,\
x_k\neq x_n}\frac{c_{mk}c_{jk}}{x_m^ax_j^a\beta _{a,k}}\notag\\
=&\frac{c_{nn}c_{jn}}{x_n^ax_j^a\beta _{a,n}}[x_i,x_n]^b e_{nj}+
\sum_{m=1}^{n-1}\frac{c_{mn}c_{jn}}{x_m^ax_j^a\beta _{a,n}}
[x_i,x_m]^b e_{nj}e_{nm} \notag\\
=&e_{nj}\frac{c_{jn}}{x_j^a\beta _{a,n}}
\Big(\frac{[x_i,x_n]^b}{x_n^a}+\sum_{m=1}^{n-1}
\frac{[x_i,x_m]^bc_{mn}e_{nm}}{x_m^a} \Big)\notag\\
:=&e_{nj} F_{ij}. \label{eq2.25}
\end{align}

In what follows, we show that $D_{ij}\in {\bf Z}$,
$E_{ij}\in {\bf Z}$ and $F_{ij}\in {\bf Z}$.
Consider the following two cases:

{\sc Case 1.} $|G_S(x_n)|=1$. One may let $G_S(x_n)=\{x_{n_0}\}$,
By Lemma \ref{L:2.3}, one has
$$\alpha_{a,n}=x_n^a-x_{n_0}^a \ {\rm and} \
\beta_{a,n}=x_n^{-a}-x_{n_0}^{-a}.$$
However, for any positive integer $m$ ($\le n-1$),
by Lemma \ref{L:2.4}, $c_{mn}=-1$ if $m=n_0$ and $c_{mn}=0$ otherwise.
So from (\ref{eq2.23}) to (\ref{eq2.25}) and Lemma \ref{L:2.11}
one can derive that
\begin{align}\label{eq2.26}
{D_{ij}=\frac{(x_i,x_n)^b-(x_i,x_{n_0})^b}
{x_n^a-x_{n_0}^a}\cdot c_{jn}}\in {\bf Z},
\end{align}

\begin{align}\label{eq2.27}
{E_{ij}=\frac{[x_i,x_n]^b-[x_i,x_{n_0}]^b}
{x_n^a-x_{n_0}^a}\cdot c_{jn}}\in {\bf Z}
\end{align}
and
\begin{align}\label{eq2.28}
{F_{ij}=\frac{x_{n_0}^a[x_i,x_n]^b-x_n^a[x_i,x_{n_0}]^b}
{x_j^a(x_{n_0}^a-x_n^a)}\cdot c_{jn}}\in {\bf Z}.
\end{align}
Since $e_{nj}\in\{0,1\}$, (\ref{eq2.20}) to (\ref{eq2.22}) follow immediately from
(\ref{eq2.23}) and (\ref{eq2.26}),
(\ref{eq2.24}) and (\ref{eq2.27}), and
(\ref{eq2.25}) and (\ref{eq2.28}), respectively.

{\sc Case 2.} $|G_S(x_n)|=2$. Let $G_S(x_n)=\{x_{n_1},x_{n_2} \}$
and $x_{n_3}=(x_{n_1},x_{n_2})$. It then follows from
(\ref{eq2.23}) to (\ref{eq2.25}) and Lemmas \ref{L:2.3},
\ref{L:2.4} and \ref{L:2.9} that
$$
{D_{ij}=\frac{(x_i,x_n)^b-(x_i,x_{n_1})^b-(x_i,x_{n_2})^b+(x_i,x_{n_3})^b}
{x_n^a-x_{n_1}^a-x_{n_2}^a+x_{n_3}^a}\cdot c_{jn}}\in {\bf Z},
$$

$$
{E_{ij}=\frac{[x_i,x_n]^b-[x_i,x_{n_1}]^b-[x_i,x_{n_2}]^b+[x_i,x_{n_3}]^b}
{x_n^a-x_{n_1}^a-x_{n_2}^a+x_{n_3}^a}\cdot c_{jn}}\in {\bf Z}
$$
and
\begin{align*}
F_{ij}&=\frac{c_{jn}}{x_j^a\alpha_{a,n}}
\Big(\frac{[x_n,x_i]^b}{x_n^a}-\frac{[x_{n_1},x_i]^b}{x_{n_1}^a}
-\frac{[x_{n_2},x_i]^b}{x_{n_2}^a}+
\frac{[x_{n_3},x_i]^b}{x_{n_3}^a}\Big)\\
&=c_{jn}\cdot\frac{x_n^a[x_i,x_{n_3}]^b+x_{n_3}^a[x_i,x_n]^b
-x_{n_2}^a[x_i,x_{n_1}]^b-x_{n_1}^a[x_i,x_{n_2}]^b}
{x_j^a({x_n^a+x_{n_3}^a-x_{n_1}^a-x_{n_2}^a})}\in {\bf Z}.
\end{align*}
Hence (\ref{eq2.20}) to (\ref{eq2.22})
hold in this case.

This finishes the proof of Lemma \ref{L:2.16}.
\end{proof}

\section{Proofs of Theorems \ref{t1.2} and \ref{t1.3}}

In this section, we first use the lemmas presented in the
previous section to show Theorem \ref{t1.2}.\\

\noindent{\it Proof of Theorem \ref{t1.2}.}
We prove Theorem \ref{t1.2} by using induction on $n=|S|$.

Let $n\le 3$. Since $S$ is gcd closed, the set $S$
satisfies $\max_{x\in S}\{|G_S(x)|\}=1$.
It then follows immediately from Lemma \ref{L:2.13}
that Theorem \ref{t1.2} (i) holds when $n\le 3$.

Now let $n\ge 4$. Assume that the result is true for the
$n-1$ case. In what follows, we show that the result is
true for the $n$ case. Since $S$ is a gcd-closed set with
$\max_{x\in S}\{|G_S(x)|\}=2$ and $S$ satisfies the
condition $\mathcal{G}$, it follows that
$S_0:=S\setminus\{\max(S)\}$ is gcd closed and
$\max_{x\in S_0}\{|G_{S_0}(x)|\}\le 2$
and $S_0$ also satisfies the condition $\mathcal{G}$.
One asserts that
\begin{align}\label{eq3.1}
(S_0^b)(S_0^a)^{-1}\in M_{n-1}({\bf Z}),
[S_0^b](S_0^a)^{-1}\in M_{n-1}({\bf Z}) \ {\rm and}
\ [S_0^b][S_0^a]^{-1}\in M_{n-1}({\bf Z}).
\end{align}
We divide its proof into the following two cases.

{\sc Case 1.} $\max_{x\in S_0}\{|G_{S_0}(x)|\}=1$.
Then by Lemma \ref{L:2.13}, we know that (3.1) holds
in this case.

{\sc Case 2.} $\max_{x\in S_0}\{|G_{S_0}(x)|\}=2$.
Then it follows from the inductive hypothesis that
(3.1) is true. The assertion is proved in this case.

Now we can apply Lemma \ref{L:2.16}. One arrives at
$$
(S^b)(S^a)^{-1}\in M_n({\bf Z}),
[S^b](S^a)^{-1}\in M_n({\bf Z}) \ {\rm and}
\ [S^b][S^a]^{-1}\in M_n({\bf Z}).
$$
In other words, in the ring $M_n({\bf Z})$,
we have $(S^a)|(S^b), (S^a)|[S^b]$ and
$[S^a]|[S^b]$ as desired. Hence Theorem
\ref{t1.2} is true for the $n$ case.
So Theorem \ref{t1.2} is proved. \hfill$\Box$\\

Finally, we give the proof of Theorem \ref{t1.3}.\\

\noindent{\it Proof of Theorem \ref{t1.3}.}
(i). Let
\begin{align}\label{eq 3.2}
S_1:=\{1,u,v,uvw\}\ {\rm with}\ (u,v)=1\ {\rm and}\ w>1.
\end{align}
Evidently, $\max_{x\in S_1}\{|G_{S_1}(x)|\}=2$
and the condition $\mathcal{G}$ is not satisfied.
We can compute and get that
\begin{align}\label{eq 3.3}
(S_1)^{-1}(S_1^b)=&
\begin{pmatrix}
1&1&1&1\\1&u&1&u\\1&1&v&v\\1&u&v&uvw
\end{pmatrix}^{-1}\cdot
\begin{pmatrix}
1&1&1&1\\1&u^b&1&u^b\\1&1&v^b&v^b\\1&u^b&v^b&(uvw)^b
\end{pmatrix}\notag\\
=&\begin{pmatrix}
1&1-\frac{u^b-1}{u-1}&1-\frac{v^b-1}{v-1}&1-\frac{u^b-1}
{u-1}-\frac{v^b-1}{v-1}+\frac{\Delta_b}{\Delta_1}\\
\\0&\frac{u^b-1}{u-1}&0&\frac{u^b-1}{u-1}-\frac{\Delta_b}{\Delta_1}\\
\\0&0&\frac{v^b-1}{v-1}&\frac{v^b-1}{v-1}-\frac{\Delta_b}{\Delta_1}\\
\\0&0&0&\frac{\Delta_b}{\Delta_1}
\end{pmatrix},
\end{align}
where
\begin{align}\label{eq 3.5}
\Delta_b:=(uvw)^b-u^b-v^b+1.
\end{align}
It follows from (\ref{eq 3.3}) that $(S_1)^{-1}(S_1^b)\in M_4({\bf Z})$
if and only if $\frac{\Delta_b}{\Delta_1}\in {\bf Z}$.
Let us continue the proof of part (i) of Theorem \ref{t1.3},
which is divided into four cases.

{\sc Case 1-1.} Picking $u=2,v=3,w=2$, one has
$$
\frac{\Delta_b}{\Delta_1}=\frac{2^b3^b2^b-2^b-3^b+1}
{2\times 3\times 2-2-3+1}
=\frac{2^b(6^b-1)}{2^3}-\frac{3^b-1}{2^3}.
$$
Since $3^2\equiv 1\pmod 8$, we know that if $b$ is
even and $b\ge 4$, then
$\frac{\Delta_b}{\Delta_1}\in {\bf Z}$. Hence
$(S_1)|(S_1^b)$ in this case.

{\sc Case 1-2.}
Taking $u=3,v=4,w=4$, one attains that
$\Delta_1=3\times 4\times 4-3-4+1=42$.

Now let $b\equiv 1\pmod 6$. By Fermat's little theorem,
one knows that $3^6\equiv 1\pmod 7$. One then derives that
$3^b\equiv 3\pmod 7$. Evidently, we have
$3^b\equiv 3\pmod 6$. Thus $3^b\equiv 3\pmod {42}$.
Likewise, we have $4^b\equiv 4\pmod {42}$.
Therefore, $\Delta_b=3^b4^b4^b-3^b-4^b+1\equiv
3\times 4\times 4-3-4+1\equiv 0\pmod {42}$.
Thus $\frac{\Delta_b}{\Delta_1}\in {\bf Z}$
implies that $(S_1)|(S_1^b)$ as desired.

{\sc Case 1-3.} Letting $u=3,v=4,w=2$ gives that
$\Delta_1=3\times 4\times 2-3-4+1=18$.
Let $b\equiv 3\pmod 6$. By Euler's theorem, one has
$4^6\equiv 1\pmod 9$. One can deduce that
$4^b\equiv 10\pmod 9$. Clearly, $4^b\equiv 10\pmod 2$.
It follows that $4^b\equiv 10\pmod {18}.$ Similarly,
we can get that $3^b\equiv 9\pmod{18}$
and $2^b\equiv 8\pmod{18}$.
So $\Delta_b=3^b4^b2^b-3^b-4^b+1
\equiv 9\times 10\times 8-9-10+1\equiv 0\pmod {18}$.
Thus $\frac{\Delta_b}{\Delta_1}\in {\bf Z}$. Hence
$(S_1)|(S_1^b)$ holds in this case.

{\sc Case 1-4.}
Picking $u=2,v=5,w=2$, we have $\Delta_1=2\times 5\times 2-2-5+1=14$.
Let $b\equiv 5\pmod 6$. It follows from Fermat little
theorem that $2^6\equiv 1\pmod 7$. Then one derives that $2^b\equiv4\pmod 7$.
Evidently, $2^b\equiv 4\pmod 2$. Therefore we obtain that $2^b\equiv4\pmod{14}$.
By Euler's theorem, we can directly deduce that $5^6\equiv 1\pmod {14}$.
So $5^b\equiv3 \pmod{14}$. Thus $\Delta_b=2^b5^b2^b-2^b-5^b+1
\equiv 4\times 3\times 4-4-3+1\equiv 0\pmod {14}$.
Hence $\frac{\Delta_b}{\Delta_1}\in {\bf Z}$ and
$(S_1)|(S_1^b)$ in this case. Part (i) is proved.

(ii). Let $S_2=S_1$ with $S_1$ being given as in (\ref{eq 3.2}).
We can calculate and obtain that
\begin{small}
\begin{equation}\label{eq 3.6}
\begin{aligned}
&\ \ (S_2)^{-1}[S_2^b]\\
=&
\begin{pmatrix}
1+\frac{1}{u-1}+\frac{1}{v-1}+\frac{1}{\Delta_1}&\frac{1}{1-u}-\frac{1}{\Delta_1}&\frac{1}{1-v}-\frac{1}{\Delta_1}&\frac{1}{\Delta_1}\\
\\\frac{1}{1-u}-\frac{1}{\Delta_1}&\frac{1}{u-1}+\frac{1}{\Delta_1}&\frac{1}{\Delta_1}&-\frac{1}{\Delta_1}\\
\\
\frac{1}{1-v}-\frac{1}{\Delta_1}&\frac{1}{\Delta_1}&\frac{1}{v-1}+\frac{1}{\Delta_1}&-\frac{1}{\Delta_1}\\
\\\frac{1}{\Delta_1}&-\frac{1}{\Delta_1}&-\frac{1}{\Delta_1}&\frac{1}{\Delta_1}
\end{pmatrix}\cdot
\begin{pmatrix}
1&u^b&v^b&(uvw)^b\\
\\u^b&u^b&(uv)^b&(uvw)^b\\
\\v^b&(uv)^b&v^b&(uvw)^b\\
\\(uvw)^b&(uvw)^b&(uvw)^b&(uvw)^b
\end{pmatrix}\\
\\
=&\begin{pmatrix}
1+\frac{1-u^b}{u-1}+\frac{1-v^b}{v-1}+\frac{\Delta_b}{\Delta_1}&u^b+u^b\cdot
\frac{1-v^b}{v-1}+\frac{\Gamma_b}{\Delta_1} &v^b+v^b\cdot
\frac{1-u^b}{u-1}+\frac{\Gamma_b}{\Delta_1} &(uvw)^b\\
\\
\frac{1-u^b}{1-u}-\frac{\Delta_b}{\Delta_1}&-\frac{\Gamma_b}{\Delta_1}&v^b\cdot
\frac{1-u^b}{1-u}-\frac{\Gamma_b}{\Delta_1}&0\\
\\
\frac{1-v^b}{1-v}-\frac{\Delta_b}{\Delta_1}&u^b\cdot \frac{1-v^b}{1-v}-
\frac{\Gamma_b}{\Delta_1}&-\frac{\Gamma_b}{\Delta_1}&0\\
\\
\frac{\Delta_b}{\Delta_1}&\frac{\Gamma_b}
{\Delta_1}&\frac{\Gamma_b}{\Delta_1}&0
\end{pmatrix},
\end{aligned}
\end{equation}
\end{small}\\
where $\Delta_b$ is given as in (\ref{eq 3.5}) and
$\Gamma_b:=u^bv^b(w^b-1)$. By (\ref{eq 3.6}),
we know that the necessary and sufficient conditions for
$(S_2)^{-1}[S_2^b]\in M_4({\bf Z})$
are $\frac{\Delta_b}{\Delta_1}\in {\bf Z}$
and $\frac{\Gamma_b}{\Delta_1}\in {\bf Z}$.
We divide the following proof of part (ii) into two cases.

{\sc Case 2-1.}
Picking $u=2,v=3,w=2$ gives us that
$$
\frac{\Delta_b}{\Delta_1}=\frac{2^b3^b2^b-2^b-3^b+1}
{2\times 3\times 2-2-3+1}
=\frac{2^b(6^b-1)}{2^3}-\frac{3^b-1}{2^3}
$$
and
$$
\frac{\Gamma_b}{\Delta_1}=\frac{2^b3^b(2^b-1)}{2^3}.$$

Since $3^2\equiv 1\pmod 8$, we know that if $b$ is even and $b\ge 4$, then
$\frac{\Delta_b}{\Delta_1}\in {\bf Z}$ and $\frac{\Gamma_b}{\Delta_1}\in {\bf Z}$.
Hence $(S_2)|[S_2^b]$ in this case.

{\sc Case 2-2.}
Letting $u=3,v=4,w=2$, we have $\Delta_1=3\times 4\times 2-3-4+1=18$.
Let $b\equiv 3\pmod 6$. As the proof of case 2-3 of part (ii), we arrive at
$3^b\equiv 9\pmod{18}$, $4^b\equiv 10\pmod{18}$ and $2^b\equiv 8\pmod{18}$.
So
$$\Delta_b=3^b4^b2^b-3^b-4^b+1\equiv 9\times 10\times 8-9-10+1\equiv 0\pmod {18}$$
and
$$\Gamma_b=3^b4^b(2^b-1)\equiv 9\times 10\times (8-1)\equiv 0\pmod {18}.$$
Thus $\frac{\Delta_b}{\Delta_1}\in {\bf Z}$ and
$\frac{\Gamma_b}{\Delta_1}\in {\bf Z}$. So $(S_2)|[S_2^b]$ in this case.
Part (ii) is proved.

(iii). Let $S_3=\{1,3,5,45\}$. Then $S_3$ is a gcd-closed
set with $\max_{x\in S_3}\{|G_{S_3}(x)|\}=2$ and
$S_3$ does not satisfy the condition $\mathcal{G}$
since $G_{S_3}(45)=\{3, 5\}$ and $[3, 5]=15<45$.
We calculate and get that
\begin{align*}
&[S_3^5][S_3]^{-1}=
\begin{pmatrix}
1&3^5&5^5&45^5\\
3^5&3^5&15^5&45^5\\
5^5&15^5&5^5&45^5\\
45^5&45^5&45^5&45^5
\end{pmatrix}
\cdot
\begin{pmatrix}
1&3&5&45\\
3&3&15&45\\
5&15&5&45\\
45&45&45&45
\end{pmatrix}^{-1}\\
=&
\begin{pmatrix}
1&243&3125&184528125\\
243&243&759375&184528125\\
3125&759375&3125&184528125\\
184528125&184528125&184528125&184528125
\end{pmatrix}
\cdot
\begin{pmatrix}
\dfrac{13}{44}&-\dfrac{2}{11}&-\dfrac{7}{44}&\dfrac{1}{22}\\
\\
-\dfrac{2}{11}&\dfrac{2}{33}&\dfrac{3}{22}&-\dfrac{1}{66}\\
\\
-\dfrac{7}{44}&\dfrac{3}{22}&\dfrac{7}{220}&-\dfrac{1}{110}\\
\\
\dfrac{1}{22}&-\dfrac{1}{66}&-\dfrac{1}{110}&\dfrac{1}{990}
\end{pmatrix}\\
=&\begin{pmatrix}
8387101&-2795440&-1677396&186360\\
8266860&-2692359&-1653372&179496\\
8250000&-2750000&-1574375&175000\\
0&0&0&4100625
\end{pmatrix}\in M_4({\bf Z}).
\end{align*}
\noindent Hence $[S_3]|[S_3^5]$ holds in the ring
$M_4({\bf Z})$.

Let $S_3=\{1,2,3,4,24\}$. Then $S_3$ is gcd closed
and $\max_{x\in S_3}\{|G_{S_3}(x)|\}=2$. Since
$G_{S_3}(24)=\{3, 4\}$ and $[3, 4]=12<24$, the set
$S_3$ does not satisfy the condition $\mathcal{G}$.
But
\begin{align*}
&\ [S_3^{11}][S_3]^{-1}\\
=&
\begin{pmatrix}
1&2^{11}&3^{11}&4^{11}&24^{11}\\
2^{11}&2^{11}&6^{11}&4^{11}&24^{11}\\
3^{11}&6^{11}&3^{11}&12^{11}&24^{11}\\
4^{11}&4^{11}&12^{11}&4^{11}&24^{11}\\
24^{11}&24^{11}&24^{11}&24^{11}&24^{11}
\end{pmatrix}
\cdot
\begin{pmatrix}
1&2&3&4&24\\
2&2&6&4&24\\
3&6&3&12&24\\
4&4&12&4&24\\
24&24&24&24&24
\end{pmatrix}^{-1}\\
\\
=&
\begin{tiny}
\begin{pmatrix}
1&2048&177147&4194304&1521681143169024\\
2048&2048&362797056&4194304&1521681143169024\\
177147&362797056&177147&743008370688&1521681143169024\\
4194304&4194304&743008370688&4194304&1521681143169024\\
1521681143169024&1521681143169024&1521681143169024&1521681143169024&1521681143169024
\end{pmatrix}
\end{tiny}\\
\\
&
\times \begin{pmatrix}
-\frac{7}{22}&1&-\frac{5}{22}&-\frac{6}{11}&\frac{1}{11}\\
\\
1&-\frac{3}{2}&0&\frac{1}{2}&0\\
\\
-\frac{5}{22}&0&\frac{5}{66}&\frac{2}{11}&-\frac{1}{33}\\
\\
-\frac{6}{11}&\frac{1}{2}&\frac{2}{11}&-\frac{5}{44}&-\frac{1}{44}\\
\\
\frac{1}{11}&0&-\frac{1}{33}&-\frac{1}{44}&\frac{1}{264}
\end{pmatrix}\\
\\
=&
\begin{tiny}
\begin{pmatrix}
138334647052987&2094081&-46111549016980&-34583662788144&5763943623432\\
138334564638720&2096128&-46111521546240&-34583596858368&5763932635136\\
137929734786330&370960166907&-45976457388807&-34667913780036&5747057180982\\
138165784412160&0&-46055261470720&-34448570580992&5741428604928\\
0&0&0&0&63403380965376
\end{pmatrix}\in M_5({\bf Z}).
\end{tiny}
\end{align*}
Thus $[S_3]|[S_3^{11}]$ holds in the ring $M_5({\bf Z})$.
Part (iii) is proved.

This concludes the proof of Theorem \ref{t1.3}. \hfill$\Box$

\section{Final remarks}

Let $S$ be a gcd-closed set and let $a$ and $b$ be positive
integers such that $a|b$. If $\max_{x\in S}\{|G_S(x)|\}=1$,
then by Zhu's theorem \cite{Z-IJNT22} and the Zhu-Li theorem
\cite{ZL-BAMS22}, one knows that $(S^a)|(S^b), (S^a)|[S^b]$
and $[S^a]|[S^b]$ hold in the ring $M_n({\bf Z})$. From
Theorem {\rm\ref{t1.2}} of this paper we know that such
factorizations are true if $\max_{x\in S}\{|G_S(x)|\}=2$
and the set $S$ satisfies the condition $\mathcal G$.
When $a=b$, for any gcd-closed sets $S$ with $\max_{x\in S}
\{|G_S(x)|\}\ge 2$, it was conjectured in \cite{ZCH-JCTA22}
that such factorizations are true if and only if the set $S$
satisfies the condition $\mathcal{G}$. By Theorem {\rm\ref{t1.3}},
one knows the existences of positive integers $b>1$ and
gcd-closed sets $S$ with $\max_{x\in S}\{|G_S(x)|\}=2$
and the condition $\mathcal{G}$ not being satisfied, such
that $(S)|(S^b)$ (resp. $(S)|[S^b]$ and $[S]|[S^b]$) holds in
the ring $M_n({\bf Z})$. In other words, when $a|b$ and $a<b$,
the condition $\mathcal{G}$ is a sufficient and unnecessary
condition for the truth of Theorem \ref{t1.2}. However,
it is not clear that for each integer $b>1$, there is a
gcd-closed set $S$ with $\max_{x\in S}\{|G_S(x)|\}\ge 2$
and the condition $\mathcal{G}$ not being satisfied such
that $(S)|(S^b)$ (resp. $(S)|[S^b]$ and $[S]|[S^b]$) holds
in the ring $M_n({\bf Z})$. This problem is still kept open.\\


\end{document}